\numberwithin{equation}{section}
\numberwithin{table}{section}
\begin{document}

\Year{2019} %
\Month{June}
\Vol{60} %
\No{1} %
\BeginPage{1} %
\EndPage{XX} %
\AuthorMark{Xiaobin Liu {\it et al.}}

\title[Superconvergence of LDG methods]{Superconvergence of local discontinuous Galerkin methods with generalized alternating fluxes for 1D linear convection-diffusion equations}


\author[1]{Xiaobin Liu}{}
\author[1]{Dazhi Zhang}{}
\author[2]{Xiong Meng}{Corresponding author}
\author[2]{Boying Wu}{}

\address[{\rm1}]{School of Mathematics, Harbin Institute of Technology, Harbin {\rm 150001}, Heilongjiang, China;}
\address[{\rm2}]{School of Mathematics and Institute for Advanced Study in Mathematics, \\Harbin Institute of Technology, Harbin {\rm 150001}, Heilongjiang, China;}
\Emails{liuxiaobin@hit.edu.cn,
zhangdazhi@hit.edu.cn, xiongmeng@hit.edu.cn, mathwby@hit.edu.cn}\maketitle


 {\begin{center}
\parbox{14.5cm}{\begin{abstract}
 This paper investigates superconvergence properties of the local discontinuous Galerkin methods with generalized alternating fluxes for one-dimensional linear convection-diffusion equations. By the technique of constructing some special correction functions, we prove the $(2k+1)$th order superconvergence for the cell averages, and the numerical traces in the discrete $L^2$ norm. In addition, superconvergence of order $k+2$ and $k+1$ are obtained for the error and its derivative at generalized Radau points. All theoretical findings are confirmed by numerical experiments.\vspace{-3mm}
\end{abstract}}\end{center}}

 \keywords{local discontinuous Galerkin method, superconvergence, correction function, Radau points}

 \MSC{65M12, 65M60}

\renewcommand{\baselinestretch}{1.2}
\begin{center} \renewcommand{\arraystretch}{1.5}
{\begin{tabular}{lp{0.8\textwidth}} \hline \scriptsize
{\bf Citation:}\!\!\!\!&\scriptsize Xiaobin Liu, Dazhi Zhang, Xiong Meng, Boying Wu. Superconvergence of local discontinuous Galerkin methods with generalized alternating fluxes for 1D linear convection-diffusion equations. \vspace{1mm}
\\
\hline
\end{tabular}}\end{center}

\baselineskip 11pt\parindent=10.8pt  \wuhao

\section{Introduction}
In this paper, we consider the local discontinuous Galerkin (LDG) methods for one-dimensional linear convection-diffusion equations
\begin{subequations}\label{1}
\begin{equation}
u_t+u_x-u_{xx}=0,\ \ \ \ \ \ \ (x,t)\in [0,2\pi]\times (0,T],
\end{equation}
\begin{equation}
u(x,0)=u_0(x),\ \ \ \ \ \ \ \ \ \ \ x\in R,~~~~~~~~~~~~~~~~~~~~
\end{equation}
\end{subequations}
where $u_0$ is sufficiently smooth. We will consider the periodic boundary condition $u(0,t)=u(2\pi,t)$, the mixed boundary condition $u(0,t)=g_1(t), u_x(2\pi,t)=g_2(t)$ and the Dirichlet boundary condition $u(0,t)=g_3(t),u(2\pi,t)=g_4(t)$. We study the superconvergence property concerning Radau points, cell averages, supercloseness of the LDG method with generalized alternating numerical fluxes, \textcolor{green}{including the case for which the parameters involved in the numerical fluxes for the prime variable regarding the convection part and the diffusion part are independently chosen for solving \eqref{1}.}

As an extension of discontinuous Galerkin (DG) method for solving first order hyperbolic equations, the LDG method was proposed by Cockburn and Shu \cite{Cockburn1} in the framework of solving second-order convection-diffusion equations. The idea of the LDG methods is to rewrite the original equation with high spatial derivatives as a first order system so that the DG method can be applied. Remark that in addition to stability issue, the local solvability of auxiliary variables introduced should also be guaranteed when choosing numerical fluxes.

Being a deeper insight of DG methods, superconvergence has been investigated basically measured in the discrete $L^2$ norm for Radau points as well \textcolor{green}{as} cell averages, in the $L^2$ norm for the error between the numerical solution and a particular projection of the exact solution (supercloseness), and in weak negative-order norm for enhancing accuracy. For example, by virtue of the duality argument in combination with the standard optimal a \emph{priori} error estimates,  Cockburn et al. \cite{Cockburn2} proved that the post-processed error is of order $2k+1$ superconvergence in the $L^2$ norm for linear hyperbolic systems and Ji et al. \cite{Ji} demonstrated that the smoothness-increasing accuracy-conserving (SIAC) filter can be extended to the multidimensional linear convection-diffusion equation in order to obtain $(2k+m)$th order superconvergence, where $m=0, \frac{1}{2}$ or $1$. Here and blow, $k$ denotes the polynomial degree of the discontinuous finite element space. Later, to efficiently compute multi-dimensional problems, the line filter, namely the one-dimensional kernel is designed via rotation in \cite{Julia}, and a rigorous proof of the post-processed errors is also given.  For arbitrary non-uniform regular meshes, by establishing the relation of the numerical solution and auxiliary variable as well as its time derivative, superconvergence of order $k+3/2$ is proved for linear convection-diffusion equations \cite{Cheng}. For supercloseness results concerning high order equations, see, for example, \cite{Hufford,Meng}. Note that aforementioned supercloseness results are not sharp. \textcolor{green}{In view of this, Yang and Shu \cite{Yang} adopted the dual argument to study the sharp superconvergence of the LDG method for one-dimensional linear parabolic equations, and improved superconvergence results of order $k+2$ were obtained in terms of supercloseness and Radau points.}

Recently, motivated by the successful applications of correction functions to finite element methods and finite volume methods for elliptic equations \cite{Chen}, Cao et al. \cite{Cao2,Cao3,Cao4,Cao7} studied superconvergence properties of DG and LDG methods for linear hyperbolic and parabolic equations. Specifically, they offered a novel proof to derive the $(2k+1)$th or $(2k+1/2)$th order superconvergence rate for the cell average and numerical fluxes, which will lead to the sharp $(k+2)$th order superconvergence for supercloseness as well as the function errors at downwind-biased points. Note that these superconvergent results are based on a supercloseness property of the DG solution to an interpolation function consisting of the difference between a standard Gauss--Radau projection of the exact solution and a carefully designed correction function. It is worth pointing out that a suitable correction is introduced to balance the difference between the projection error for the inner product term and for the DG operator term, and in standard optimal error estimates when a Gauss--Radau projection is used, the projection error involved in the DG operator term is exactly zero. This indicates that the standard  Gauss--Radau projection is not the best choice for superconvergence analysis. The superconvergence of the direct DG (DDG) method for the one-dimensional linear convection-diffusion equation was studied in \cite{Cao5}. We would like to remark that all the work mentioned above are focused on purely upwind and alternating numerical fluxes.

In order to obtain flexible numerical dissipation with potential applications to nonlinear systems, the upwind-biased flux was proposed in \cite{Meng3}, which is a linear combination of the numerical solution from both sides of interfaces. Stability and optimal error estimates were obtained by constructing and analyzing some suitable \emph{global} projections with emphasize on analysis of some circulant matrices. Note that the design of \emph{global} projections was similar to those in the work for the Burgers--Poisson equation \cite{Liu}. Moreover, Cheng et al. \cite{Cheng1} studied the LDG methods for the linear convection-diffusion equations when the generalized alternating fluxes were used, and they obtained the optimal $L^2$ norm error estimate in a unified setting, especially when numerical fluxes with different weights are considered. In \cite{Cao6}, Cao et al. investigated the superconvergence of DG methods based on upwind-biased fluxes for one-dimensional linear hyperbolic equations. More recently, Frean and Ryan \cite{Ryan1} proved that the use of SIAC filters was still able to extract the superconvergence information and obtain a globally smooth and superconvergent solution of order $2k+1$ for linear hyperbolic equations based on upwind-biased fluxes. Moreover, the $\alpha\beta$-fluxes, which were introduced as linear combinations of average and jumps of the solution as well as the auxiliary variables at cell interfaces, has been a hot research topic in recent years \cite{Ainsworth,Fu,Yingda}.

In current paper, we aim at analyzing the superconvergence properties of LDG methods using generalized alternating numerical fluxes for the convection-diffusion equations. The contribution of this paper is to consider the more flexible generalized alternating fluxes.
The critical step in deriving superconvergence is to construct special interpolation functions for both variables (the exact solution $u$ and the auxiliary variable $q$) with the aid of some suitable correction functions, essentially following \cite{Cao6}. Taking into account the stability result, we use special projections to eliminate or control the troublesome jump terms involving projection errors; see, e.g. Lemma \ref{ww}. To be more precise, we will establish the superconvergence between the LDG solution $(u_h,q_h)$ and special interpolation functions $u^\ell_I=P_{\theta}u-W^\ell_u$ as well as $q^\ell_I=P_{\tilde{\theta}}q-W^\ell_q$, where $W^\ell_u$ and $W^\ell_q$ are correction functions to be specified later, with the main technicality being the construction and analysis of some suitable projections tailored to the very choice of the numerical fluxes. By a rigorous mathematical proof, we prove a superconvergence rate of $2k+1$ for the errors of numerical traces and for the cell averages, and $k+2$ for the DG error at generalized Radau points.

The paper is organized as follows. In section \ref{ldg}, we present the LDG method with generalized alternating fluxes. In section \ref{correction}, we construct special functions to correct the error between the LDG solution and the standard Gauss--Radau projections of the exact solution. Section \ref{super} is the main body of the paper, in which we show and prove some superconvergence phenomena for cell averages and generalized Radau points for periodic boundary conditions. Other boundary cases including the mixed boundary condition and Dirichlet boundary condition will be considered in section \ref{extension}, and the choice of numerical initial discretization is also given. In section \ref{numerical}, we present some numerical experiments that confirm the sharpness of our theoretical results. We will end in section \ref{summary} with concluding remarks and some possible future work.

\section{The LDG scheme}\label{ldg}

In this section, we present the LDG scheme with generalized alternating fluxes for the linear convection-diffusion equation \eqref{1}. As usual, we divide the computational domain $\Omega=[0,2\pi]$ into $N$ cells
\begin{equation*}
0=x_{\frac{1}{2}}<x_{\frac{3}{2}}<\cdot\cdot\cdot<x_{N+\frac{1}{2}}=2\pi.
\end{equation*}
For any positive integer $r$, we define $\mathbb{Z}_r=\{1,\cdot\cdot\cdot,r\}$ and denote
\begin{align*}
x_j=\frac{1}{2}\big(x_{j-\frac{1}{2}}+x_{j+\frac{1}{2}}\big),\ \ \ \ I_j=\big(x_{j-\frac{1}{2}},x_{j+\frac{1}{2}}\big),\ \ \ j\in \mathbb{Z}_N
\end{align*}
as the cell centers and cells, respectively. Let $h_j=x_{j+\frac{1}{2}}-x_{j-\frac{1}{2}}$ be the length of the cell $I_j$ for $j\in \mathbb{Z}_N$, and $h=\max\limits_{1\leq j\leq N}h_j$. We assume the partition $\Omega_h$ is quasi-uniform in the sense that there exists a constant $C$ independent of $h$ such that $Ch\leq h_j\leq h$. Define the finite element space
\begin{align*}
V^k_h=\{v\in L^2(\Omega):v|_{I_j}\in P^k(I_j),\ \forall j\in\mathbb{Z}_N\},
\end{align*}
where $P^k(I_j)$ is the space of polynomials on $I_j$ of degree at most $k\geq 0$. We use
\begin{align*}
\bar{u}_{j+\frac{1}{2}}=\frac{1}{2}(u^+_{j+\frac{1}{2}}+u^-_{j+\frac{1}{2}}),\ \ \ \ \ \ \ [u]_{j+\frac{1}{2}}=u^+_{j+\frac{1}{2}}-u^-_{j+\frac{1}{2}}
\end{align*}
to denote the mean and jump of the function $u$ at each element boundary point $x_{j+\frac{1}{2}}$, and the weighted average is denoted by
\begin{align*}
u^{(\theta)}_{j+\frac{1}{2}}=\theta u^-_{j+\frac{1}{2}}+\tilde{\theta}u^+_{j+\frac{1}{2}}, ~~ \tilde{\theta}=1-\theta,
\end{align*}
where $u^+_{j+\frac{1}{2}}$ and $u^-_{j+\frac{1}{2}}$ are the traces from the right and left cells, respectively.

Throughout this paper, we employ $W^{\ell,p}(D)$  to denote the standard Sobolev space on $D$ equipped with the norm $\|\cdot\|_{W^{\ell,p}(D)}$ with $\ell\geq 0, p=2, \infty$. For simplicity,  we set $\|\cdot\|_{W^{\ell,p}(D)}=\|\cdot\|_{\ell, p, D}$ with $D= \Omega$ or $I_j$. The subscript $D$ will be omitted when $D= \Omega$, \textcolor{green}{and $W^{\ell,p}(D)$ can be written as $H^\ell(D)$ when $p=2$.}

In order to construct the LDG scheme, we first introduce an auxiliary variable $q=u_x$, then the problem \eqref{1} can be written into a first order system
\begin{equation}\label{13}
u_t + (u-q)_x =0, ~~~q - u_x = 0,
\end{equation}
where $(u-q,u)$ is the physical flux and $u$ is the so-called prime variable.
The LDG scheme is thus to find $u_h, q_h\in V^k_h$ such that for all test functions $v,\psi\in
V^k_h$

\begin{subequations}\label{2}
\begin{equation}
(u_{ht},v)_j-(u_h-q_h,v_x)_j+(\tilde{u}_h-\hat{q}_h)v^-|_{j+\frac{1}{2}}-(\tilde{u}_h-\hat{q}_h)v^+|_{j-\frac{1}{2}}=0,\\
\end{equation}
\begin{equation}
(q_h,\psi)_{j}+(u_h,\psi_x)_{j}-\hat{u}_h\psi^-|_{j+\frac{1}{2}}+\hat{u}_h\psi^+|_{j-\frac{1}{2}}=0. ~~~~~~~~~~~~~~~~~~~~~~~~
\end{equation}
\end{subequations}
Here $(u,v)_j=\int_{I_j}uvdx$, and $\tilde{u}_h, \hat{q}_h,\hat{u}_h$ are numerical fluxes. We use the generalized alternating numerical fluxes related to arbitrary parameters $\lambda$ and $\theta$ as in \cite{Cheng1}. That is,
\begin{align}\label{23}
(\tilde{u}_h-\hat{q}_h,\hat{u}_h)=(u^{(\lambda)}_h-q^{(\tilde{\theta})}_h,u^{(\theta)}_h).
\end{align}
Remark that the parameters in the numerical flux regarding the convection part and diffusion part can be chosen independently, and to ensure stability the weight $\lambda$ should satisfy $\lambda \ge \frac{1}{2}$.

For simplicity, we introduce the notation pertaining to the DG operator
\begin{align*}
\mathcal{H}^1(u,q;v)=\sum^N_{j=1}\mathcal{H}^1_j(u,q;v),\ \ \ \ \ \ \textcolor{green}{\mathcal{H}^2(u;\psi)=\sum^N_{j=1}\mathcal{H}^2_j(u;\psi),}
\end{align*}
where
\begin{align*}
\mathcal{H}^1_j(u,q;v)  &= (q-u,v_x)_j- (\hat{q} - \tilde u)v^-|_{j+\frac{1}{2}}+(\hat{q} - \tilde u)v^+|_{j-\frac{1}{2}},\\
\textcolor{green}{\mathcal{H}^2_j(u;\psi)} &   =  \textcolor{green}{  (u,\psi_x)_{j}-\hat{u}\psi^-|_{j+\frac{1}{2}}+\hat{u}\psi^+|_{j-\frac{1}{2}}.}
\end{align*}
Thus, by Galerkin orthogonality, the cell error equation can be written as
\begin{align}\label{25}
({e_u}_t,v)_j+(e_q,\psi)_{j}+\mathcal{H}^1_j(e_u,e_q;v)+\textcolor{green}{\mathcal{H}^2_j(e_u;\psi)}=0,\ \ \ \forall v,\psi \in V_h^k,
\end{align}
where $e_u=u-u_h, e_q=q-q_h$.

For optimal error estimates of the LDG scheme using the generalized numerical fluxes \eqref{23} solving convection-diffusion equations with periodic boundary conditions, a globally defined projection $P_\theta$ together with $P_{\tilde \theta}$ is usually needed. For $z\in H^1(\Omega_h) = \cup_{j\in \mathbb Z_N} H^1(I_j)$, the generalized Gauss--Radau projection $P_\theta z$ is defined as the element of $V^k_h$ that satisfies
\begin{subequations}
\begin{align}\label{18}
&\int_{I_j}(P_\theta z-z)v_hdx=0,\ \ \ \ \ \forall v_h\in P^{k-1}(I_j), \\
&\big(P_\theta z\big)^{(\theta)}_{j+\frac{1}{2}}=\big(z^{(\theta)}\big)_{j+\frac{1}{2}}, \ \ \ \ \ \ \forall j\in \mathbb{Z}_N.~~~~~~~~
\end{align}
\end{subequations}
It has been shown in \cite{Liu, Meng3} that the projection $P_\theta z$ is well defined for $\theta \neq \frac{1}{2}$, and for $\theta = 1/2$ some restrictions on the mesh as well as polynomial degree are needed to guarantee existence and optimal approximation property of the projection \cite{Bona}. Note that when the parameter $\theta$ is taken as $0$ or $1$, the projection $P_\theta$ reduces to the standard local Gauss--Radau projection $P^+_h$ or $P^-_h$ as defined in \cite{Castillo2}. Besides, the projection $P_\theta$ satisfies the following optimal approximation property \cite{Liu, Meng3}
\begin{align}\label{26}
\|z-P_\theta z\|_{I_j}+h^{\frac{1}{2}}\|z-P_\theta z\|_{\infty,I_j}\leq Ch^{k+\frac{3}{2}}\|z\|_{k+1,\infty},
\end{align}
where $C>0$ is independent of $h$ and $z$.

To obtain the superconvergence results, the following lemma is useful in describing correction functions.
\begin{lemma}\cite{Cao6}\label{27}
Suppose $A$ is an $N\times N$ circulant matrix with the first row $(\theta,(-1)^k(1-\theta),0,...,0)$ and the last row $((-1)^k(1-\theta),0,0,...,\theta)$, where $\theta > 1/2$. Then, for any vectors $X=(x_1,...,x_N)^T, b=(b_1,...,b_N)^T$ satisfying $AX=b$, there holds
\begin{equation*}
|x_j| \lesssim \| b\|_\infty, \quad \forall j \in \mathbb Z_N.
\end{equation*}
\end{lemma}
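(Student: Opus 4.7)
The plan is to exploit strict diagonal dominance of $A$. By the circulant structure, every row of $A$ contains the diagonal entry $\theta$ together with exactly one off-diagonal entry of magnitude $|1-\theta|$. For the relevant range $\theta > 1/2$ (weights in the paper satisfy $\theta \in (1/2,1]$), the row-wise dominance gap is $\theta - (1-\theta) = 2\theta - 1 > 0$, uniformly in $N$. This $N$-independence of the dominance gap is exactly what will feed into the stated bound.

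Second, I would translate this algebraic dominance into the desired entrywise bound by a maximum-principle style argument. Pick an index $i^\ast$ realizing $\|X\|_\infty = |x_{i^\ast}|$, write out the $i^\ast$-th scalar equation of $AX=b$, and isolate the diagonal term as $\theta x_{i^\ast} = b_{i^\ast} - (-1)^k(1-\theta)\,x_{j^\ast}$, where $j^\ast$ is the unique column carrying the off-diagonal entry in row $i^\ast$ (namely $j^\ast = i^\ast+1$, or $j^\ast = 1$ in the cyclic wraparound row $i^\ast = N$). The triangle inequality together with $|x_{j^\ast}| \leq |x_{i^\ast}|$ then yields $(2\theta-1)|x_{i^\ast}| \leq |b_{i^\ast}| \leq \|b\|_\infty$, so $|x_j| \leq (2\theta-1)^{-1}\|b\|_\infty$ for every $j$, which is the advertised estimate.

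There is no serious obstacle to this plan. The one conceptual point worth flagging is that the more tempting spectral approach is not as clean here: although the eigenvalues of $A$ are explicitly $\lambda_m = \theta + (-1)^k(1-\theta)\omega^m$ with $\omega = e^{2\pi i/N}$, and these are bounded below in modulus by $2\theta - 1$, this only delivers $\|A^{-1}\|_2 \leq (2\theta-1)^{-1}$ and hence an $\ell^\infty$ bound degraded by an unwanted $\sqrt{N}$ factor. The direct diagonal-dominance argument above bypasses that loss entirely, and this is the reason for preferring it. Note also that the proof uses circulant structure only through the count of one off-diagonal entry per row, so the argument is robust and does not rely on any Fourier representation.
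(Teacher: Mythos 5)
Your argument is correct, and it is worth noting that the paper itself does not prove this lemma at all: it is quoted from \cite{Cao6}, where the bound is obtained by exploiting the explicit structure of the inverse of the circulant matrix. Concretely, writing $A=\theta\bigl(I+pS\bigr)$ with $S$ the cyclic shift and $p=(-1)^k(1-\theta)/\theta$ (note $|p|<1$ precisely when $\theta>1/2$; compare the determinant formula $|A|=\theta^N(1-p^N)$ displayed in the paper just before the lemma is used), the cited proof expresses each $x_j$ as a finite geometric-type sum $\frac{1}{\theta(1-(-p)^N)}\sum_{m=0}^{N-1}(-p)^m b_{j+m}$ and bounds it by $\|b\|_\infty\sum_m|p|^m\lesssim\|b\|_\infty/(1-|p|)$. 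Your maximum-principle argument via strict diagonal dominance reaches the same $N$-independent constant without inverting anything, is shorter, and --- as you correctly observe --- only uses the fact that each row has a single off-diagonal entry, so it would survive perturbations that destroy the circulant structure; what the explicit-inverse route buys in exchange is a formula for the solution itself, which is occasionally useful when one wants the sign or leading behaviour of the coefficients $\beta^j_{i,k}$, not just their size. Your remark that the naive spectral bound loses a factor $\sqrt N$ is also accurate and is exactly why neither approach goes through the $\ell^2$ operator norm.

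One small point to tidy up: the lemma is stated for all $\theta>1/2$, and the paper's numerical experiments actually take $\theta=1.2$, so you should not restrict to $\theta\in(1/2,1]$. For $\theta>1$ the off-diagonal magnitude is $|1-\theta|=\theta-1$ and the dominance gap is $\theta-|1-\theta|=1$, not $2\theta-1$; the correct uniform constant is $\bigl(\theta-|1-\theta|\bigr)^{-1}=\max\{(2\theta-1)^{-1},1\}$. The structure of your argument is unaffected --- simply replace $1-\theta$ by $|1-\theta|$ throughout the triangle-inequality step.
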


\section{Correction functions}\label{correction}
In what follows, we will present the construction of correction functions. The cases for the weights of the prime variable $u_h$ in \eqref{23} being the same or different are discussed in the following two subsections.
\textcolor{red}{\subsection{The case with $\lambda = \theta$ in \eqref{23}}\label{same}}
When $\lambda = \theta$ in \eqref{23}, to construct special interpolation functions $(u^{\ell}_I,q^{\ell}_I)$ by modifying generalized Gauss--Radau projections with correction functions so that they are superclose to the LDG solution $(u_h,q_h)$, we start by denoting $L_{j,k}$ as the standard Legendre polynomial of degree $k$ on the interval $I_j$, and assume that the function $v(x,t)$ has the following Legendre expansion. That is, on each $I_j, j\in \mathbb{Z}_N$,
\begin{align*}
v(x,t)=\sum^\infty_{m=0}v_{j,m}(t)L_{j,m}(x),\ \ \ \ \ \ v_{j,m}=\frac{2m+1}{h_j}(v,L_{j,m})_j.
\end{align*}
By the definition of $P_{\theta}$ in \eqref{18}, we can rewrite $P_\theta v$ into the following form
\begin{align*}
P_{\theta}v=\sum^k_{m=0}v_{j,m}(t)L_{j,m}(x)+\bar{v}_{j,k}(t)L_{j,k}(x),
\end{align*}
where $\bar{v}_{j,k}$ can be determined by $(v- P_\theta v)_{j+1/2}^{(\theta)} = 0$ with
\begin{align}
v-P_{\theta}v=-\bar{v}_{j,k}(t)L_{j,k}(x)+\sum^\infty_{m=k+1}v_{j,m}(t)L_{j,m}(x).
\end{align}
It follows from the orthogonality of Legendre polynomials and \eqref{26} that,
\begin{align*}
|\bar{v}_{j,k}|\lesssim \frac{2k+1}{h_j}|(v-P_\theta v,L_{j,k})_j|\lesssim h^{k+1}\|v\|_{k+1,\infty}.
\end{align*}
Following \cite{Cao6}, to balance projection errors for the inner product term and the DG operator term, we define an integral operator $D^{-1}_x$ by
$$
D^{-1}_xu(x)=\frac{1}{\bar{h}_j}\int^x_{x_{j-\frac{1}{2}}}u(\tau)d\tau,\ \ \ \ \ \tau\in I_j,
$$
where $\bar h_j = h_j/2$. Obviously, $u(x) = \bar h_j \big(D^{-1}_xu(x)\big)_x $. Moreover, by the properties of Legendre polynomials, we have
\begin{equation}\label{primal}
D^{-1}_xL_{j,k}(x)=\frac{1}{2k+1}(L_{j,k+1}-L_{j,k-1})(x).
\end{equation}

To clearly see how to cancel terms involving projection errors with the goal of obtaining superconvergence, we split the error $e_u,e_q$ into two parts:
\begin{align*}
&e_u=u-u_h=u-u^\ell_I+u^\ell_I-u_h \triangleq \eta_u+\xi_u,\\
&e_q=q-q_h=q-q^\ell_I+q^\ell_I-q_h\triangleq\eta_q+\xi_q.
\end{align*}
Then error equation (\ref{25}) becomes
\begin{align*}
&(\xi_{ut},v)_j+(\xi_q,\psi)_{j}+\mathcal{H}^1_j(\xi_u,\xi_q;v)+\textcolor{green}{\mathcal{H}^2_j(\xi_u;\psi)}\\
=&-(\eta_{ut},v)_j-(\eta_q,\psi)_{j}-\mathcal{H}^1_j(\eta_u,\eta_q;v)-\textcolor{green}{\mathcal{H}^2_j(\eta_u;\psi)}.
\end{align*}
For periodic boundary conditions, by choosing $v=\xi_u, \psi=\xi_q$ and summing over all $j$, we have
\begin{align}
&\frac{1}{2}\frac{d}{dt}\|\xi_u\|^2+\|\xi_q\|^2+(\lambda-\frac{1}{2})\sum^N_{j=1}[\xi_u]^2_{j+\frac{1}{2}}\nonumber \\
=&-(\eta_{ut},\xi_u)-(\eta_q,\xi_q)-\mathcal{H}^1(\eta_u,\eta_q;\xi_u)
-\textcolor{green}{\mathcal{H}^2(\eta_u;\xi_q)}.\label{24}
\end{align}
From the equation \eqref{24}, we can see that in order to obtain the supercloseness properties between the numerical solution $u_h$ and interpolation function $u^\ell_I$, we need to obtain a sharp superconvergent bound for the right-hand term, essentially using the switch of the time derivative and spatial derivative through the integral operator $D_x^{-1}$ in combination with integration by parts; see Lemma \ref{ww} below. Next, we show how to construct interpolation functions and estimate the right-hand side of \eqref{24}.

To construct the interpolation functions $(u^\ell_I,q^\ell_I)$, we define a series of functions $w_{u,i},w_{q,i}\in V_h^k, i\in\mathbb{Z}_k$ as follows:
\begin{subequations}\label{28}
\begin{align}
&(w_{u,i}-\bar{h}_jD_x^{-1}w_{q,i-1},v)_j=0,\ \ \ \ \ \ \ \ \ \ \ \ \ \ \big(w^{(\theta)}_{u,i}\big)_{{j+\frac{1}{2}}}=0, \label{5} \\
&(w_{q,i}-w_{u,i}-\bar{h}_jD_x^{-1}\partial_tw_{u,i-1},v)_j=0,\ \ \ \big(w^{(\tilde{\theta})}_{q,i}\big)_{{j+\frac{1}{2}}}=0, \label{4}
\end{align}
\end{subequations}
where $v\in P^{k-1}(I_j)$, and
\begin{align*}
w_{u,0}=u-P_\theta u,\ \ \ \ \ \ w_{q,0}=q-P_{\tilde{\theta}} q.
\end{align*}

\begin{lemma}\label{w}
The functions $w_{u,i},w_{q,i}, i\in \mathbb{Z}_k$ defined in (\ref{28}) have the following properties
\begin{subequations}
\begin{equation}
\|\partial_tw_{u,i}\|_{\infty}\lesssim h^{k+i+1}\|u\|_{k+i+3,\infty},\ \ \|w_{q,i}\|_{\infty}\lesssim h^{k+i+1}\|u\|_{k+i+2,\infty},\label{6}
\end{equation}
\begin{equation}\label{7}
(w_{u,i},v)_j=0,\ \qquad (w_{q,i},v)_j=0,\ \qquad \forall v\in P^{k-i-1}(I_j).
\end{equation}
\end{subequations}
\end{lemma}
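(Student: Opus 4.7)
The plan is to prove both (a) and (b) simultaneously by induction on $i \in \mathbb{Z}_k$. For the base case $i=0$, the orthogonality $(w_{u,0}, v)_j = (w_{q,0}, v)_j = 0$ for $v \in P^{k-1}(I_j)$ is immediate from the defining relation \eqref{18} of $P_\theta$ and $P_{\tilde\theta}$, and the sup-norm bounds follow by applying \eqref{26} to $u$ and to $q = u_x$. For $\|\partial_t w_{u,0}\|_\infty = \|u_t - P_\theta u_t\|_\infty \lesssim h^{k+1}\|u_t\|_{k+1,\infty}$ I would substitute the PDE $u_t = u_{xx} - u_x$ to convert the time derivative into spatial derivatives, producing the desired $h^{k+1}\|u\|_{k+3,\infty}$. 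This same trick explains the two-unit jump in the regularity index every time $\partial_t$ appears.

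For the inductive step, assume both properties hold at level $i-1$. To prove (b) for $w_{u,i}$ against $v \in P^{k-i-1}(I_j)$, I would pick an antiderivative $V \in P^{k-i}(I_j)$ of $v$ and integrate by parts:
\[
(D_x^{-1} w_{q,i-1}, v)_j = \big[D_x^{-1}w_{q,i-1}\cdot V\big]_{x_{j-1/2}}^{x_{j+1/2}} - \frac{1}{\bar h_j}(w_{q,i-1}, V)_j.
\]
The boundary term at $x_{j-1/2}$ vanishes by the definition of $D_x^{-1}$; the term at $x_{j+1/2}$ vanishes since $D_x^{-1}w_{q,i-1}(x_{j+1/2}) = \bar h_j^{-1}(w_{q,i-1}, 1)_j = 0$ using the inductive orthogonality with $P^{k-i}(I_j)$ (valid for $i \leq k$); and the volume term vanishes because $V \in P^{k-i}(I_j) = P^{k-(i-1)-1}(I_j)$ is in the inductive orthogonality class. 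Combined with \eqref{5} this yields $(w_{u,i}, v)_j = 0$. An identical argument applied to $\bar h_j D_x^{-1} \partial_t w_{u,i-1}$ in \eqref{4}, together with the orthogonality for $w_{u,i}$ just proved and the fact that time differentiation commutes with spatial orthogonality, gives $(w_{q,i}, v)_j = 0$.

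For the sup-norm bounds, I would expand $w_{u,i}|_{I_j} = \sum_{m=0}^{k}\alpha_{j,m} L_{j,m}$. The orthogonality in (b) forces $\alpha_{j,0} = \cdots = \alpha_{j,k-i-1} = 0$. The intermediate coefficients $\alpha_{j,m}$, $k-i \leq m \leq k-1$, are determined locally by testing \eqref{5} against $L_{j,m}$, yielding $|\alpha_{j,m}| \lesssim \|w_{q,i-1}\|_{\infty,I_j}$. The top coefficient $\alpha_{j,k}$ is coupled globally through $(w_{u,i}^{(\theta)})_{j+1/2} = 0$; substituting the values $L_{j,m}(\pm 1) = (\pm 1)^m$ reduces this to a circulant linear system whose matrix has exactly the form in Lemma \ref{27}, and the invertibility bound there yields $|\alpha_{j,k}| \lesssim \|w_{q,i-1}\|_\infty$ as well. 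Combining these with the inductive bound $\|w_{q,i-1}\|_\infty \lesssim h^{k+i}\|u\|_{k+i+1,\infty}$ together with the extra factor $\bar h_j \sim h$ from the $D_x^{-1}$ operator gives $\|w_{u,i}\|_\infty \lesssim h^{k+i+1}\|u\|_{k+i+1,\infty}$. The same scheme applied to \eqref{4} controls $\|w_{q,i}\|_\infty$, in which the presence of $\partial_t w_{u,i-1}$ combined with the PDE accounts for the desired regularity index $k+i+2$. Finally, $\|\partial_t w_{u,i}\|_\infty$ is obtained by differentiating \eqref{5}--\eqref{4} in $t$, which produces the same linear system with $u$ replaced by $u_t$; applying the PDE once more supplies the two additional units of regularity.

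The main technical obstacle is the global coupling introduced by the boundary conditions: the top Legendre coefficient in each cell cannot be fixed locally, and the circulant-inversion estimate from Lemma \ref{27} is essential to close the bound independently of $N$. A secondary bookkeeping challenge is to track, at each level of the recursion, how each application of $D_x^{-1}$ supplies one extra factor of $h$ and how each time derivative routed through the PDE costs two additional units of spatial regularity, so that the counts $h^{k+i+1}$ and $\|u\|_{k+i+2,\infty}$, $\|u\|_{k+i+3,\infty}$ come out exactly as claimed.
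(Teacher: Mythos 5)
Your proposal is correct and follows essentially the same route as the paper: induction on $i$, a Legendre expansion in which the low modes vanish, local determination of the intermediate coefficients, a circulant system resolved by Lemma \ref{27} for the top coefficient, and substitution of the PDE to trade each time derivative for spatial regularity. The only cosmetic difference is that you obtain the orthogonality \eqref{7} by integrating $D_x^{-1}$ against an antiderivative of the test function, whereas the paper reads it off from the explicit mode-shifting identity \eqref{primal}; the two are equivalent.
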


\begin{proof}\quad
The proof of this lemma is based on deriving the following expression of $w_{u,i}$ and $w_{q,i}$ in each element $I_j$, which can be obtained by induction. It reads,
\begin{align}
w_{u,i}\big|_{I_j}=\sum^k_{m=k-i}\beta^j_{i,m}L_{j,m}(x),\ \ \ \ w_{q,i}\big|_{I_j}=\sum^k_{m=k-i}\gamma^j_{i,m}L_{j,m}(x),\ \ \ i\in \mathbb{Z}_k.\label{17}
\end{align}

\emph{Step 1}: When $i=1$, by taking $v=L_{j,m}$ with $m\leq k-1$ in \eqref{5} and using \eqref{primal} together with the orthogonality property of Legendre polynomials, we obtain
\begin{align*}
(w_{u,1}-\bar{h}_jD_x^{-1}w_{q,0},v)=(\beta^j_{1,k-1}L_{j,k-1}-\frac{\bar{q}_{j,k}}{2k+1}\bar{h}_jL_{j,k-1},v)=0.
\end{align*}
Obviously, $\beta^j_{1,k-1}=\frac{\bar{q}_{j,k}}{2k+1}\bar{h}_j$, where $\bar{q}_{j,k}$ is the coefficient of the Legendre expansion for $q$; see (3.1) with $v$ replaced by $q$ and $P_{\theta}$ replaced by $P_{\tilde \theta}$. Using the fact that $\big(w^{(\theta)}_{u,1}\big)_{j+\frac{1}{2}}=0$ we have
\begin{align}
\theta\beta^j_{1,k}+(-1)^k(1-\theta)\beta^{j+1}_{1,k}=(-1)^{k}(1-\theta)\beta^{j+1}_{1,k-1}-\theta\beta^{j}_{1,k-1}.\label{16}
\end{align}
Then the linear system (\ref{16}) can be written in the matrix-vector form
\begin{align*}
A\beta_{1,k}=b,
\end{align*}
where $A=circ(\theta,(-1)^k(1-\theta),0,...,0)$ is an $N\times N$ circulant matrix, and
\begin{equation*}
\beta_{1,k}=\begin{pmatrix}\beta^1_{1,k}\\\beta^2_{1,k}\\
\vdots\\\beta^N_{1,k}\end{pmatrix},\ \ \ \ \ b=\begin{pmatrix}-\theta\beta^1_{1,k-1}+(-1)^k(1-\theta)\beta^2_{1,k-1}
\\-\theta\beta^2_{1,k-1}+(-1)^k(1-\theta)\beta^3_{1,k-1}\\\vdots\\
-\theta\beta^N_{1,k-1}+(-1)^k(1-\theta)\beta^{1}_{1,k-1}
\end{pmatrix}.
\end{equation*}
It is easy to compute the determinant of the $A$ in the form
\begin{align*}
|A|=\theta^N(1-p^N),\ \ \ \ p=\frac{(-1)^k(\theta-1)}{\theta},
\end{align*}
 and for $\theta\neq \frac{1}{2}$ the matrix $A$ is always invertible. Therefore, the linear system (\ref{16}) has the unique solution. Moreover, by Lemma \ref{27}, we have
\begin{align*}
|\beta^j_{1,k}|\lesssim \max_{1\leqslant \ell\leqslant N}|b_\ell|\lesssim h^{k+2}\|u\|_{k+2,\infty},\ \ \forall j\in \mathbb{Z}_N.
\end{align*}
Thus,
\begin{align*}
\|\partial_tw_{u,1}\|_{\infty, I_j}=\|\partial_t(\beta^j_{1,k-1}L_{j,k-1}+\beta^j_{1,k}L_{j,k})
\|_{\infty,I_j}\lesssim h_j|\partial_t\bar{q}_{j,k}|\lesssim h^{k+2}\|u\|_{k+4,\infty}.
\end{align*}

Similarly, when choosing $v=L_{j,m}, m\leq k-1$ in (\ref{4}), we obtain
$$
w_{q,1}\big|_{I_j}=\sum^k_{m=k-1}\gamma^j_{1,m}L_{j,m},
$$
where
 $$\gamma^j_{1,k-1}=\beta^j_{1,k-1}
+\frac{\partial_t\bar{u}_{j,k}}{2k+1}\bar{h}_j,$$ and $\gamma^j_{1,k}$ is the solution of the following linear system
\begin{align*}
\tilde{A}\gamma_{1,k}=\tilde{b},
\end{align*}
with $\tilde{A}=circ(\tilde{\theta},(-1)^k(1-\tilde{\theta}),0,...,0)$ being an $N\times N$ circulant matrix, and
\begin{equation*}
\gamma_{1,k}=\begin{pmatrix}\gamma^1_{1,k}\\\gamma^2_{1,k}\\\vdots\\\gamma^N_{1,k}\end{pmatrix},\ \ \ \ \ \tilde{b}=\begin{pmatrix}-\tilde{\theta}\gamma^1_{1,k-1}+(-1)^k(1-\tilde{\theta})\gamma^2_{1,k-1}\\-\tilde{\theta}\gamma^2_{1,k-1}
+(-1)^k(1-\tilde{\theta})\gamma^3_{1,k-1}\\\vdots\\-\tilde{\theta}\gamma^N_{1,k-1}+(-1)^k(1-\tilde{\theta})\gamma^{1}_{1,k-1}
\end{pmatrix}.
\end{equation*}
Consequently, the estimate of $\|w_{q,1}\|_{\infty}$ in \eqref{6} follows by using Lemma \ref{27} and optimal approximation property \eqref{26}. Moreover, \eqref{7} is a trivial consequence of the expression \eqref{17} when the orthogonality property of Legendre polynomials is taken into account.

\emph{Step 2}: Suppose that (\ref{6}) and (\ref{17}) are valid for all $i\le k -1$ and we want to prove it still holds for $i+1$. From equation \eqref{5}, we can get
\begin{align*}
\left(w_{u,i+1}- \bar h_j D_x^{-1}\Big(\sum^k_{m=k-i}\gamma^j_{i,m}L_{j,m}\Big),v\right)_j=0, ~~\forall v\in P^{k-1}(I_j).
\end{align*}
In order to get the superconvergent bounds of $w_{u,i+1}$, we need to find out the expression of coefficient $\beta_{i+1,m}$. After a direct calculation, there hold
\begin{align*}
&\beta^j_{i+1,k-i-1}=-\frac{\gamma^j_{i,k-i}\bar{h}_j}{2(k-i)+1},\ \ \beta^j_{i+1,k-i}=-\frac{\gamma^j_{i,k-i+1}\bar{h}_j}{2(k-i)+3},\\
&\beta^j_{i+1,m}=\bar{h}_j\Big(\frac{\gamma^j_{i,m-1}}{2m-1}-
\frac{\gamma^j_{i,m+1}}{2m+3}\Big),\ \ m=k-i+1,\cdots,k-1.
\end{align*}
Moreover, by the fact that $w^{(\theta)}_{u,i+1}=0$, we get
\begin{align*}
\theta(\beta^j_{i+1,k-i-1}+\cdots+\beta^j_{i+1,k})+
(1-\theta)(-1)^{k-i-1}\beta^{j+1}_{i+1,k-i-1}+\cdots
+(1-\theta)(-1)^k\beta^{j+1}_{i+1,k}=0.
\end{align*}
Again, we can write the above linear system into the matrix-vector form
\begin{align*}
A\beta_{i+1,k}=c,
\end{align*}
and when $\theta\neq\frac{1}{2}$, we arrive at the unique existence of the system. Consequently, it follows from Lemma \ref{27} that
\begin{align*}
\|\partial_tw_{u,i+1}\|_{\infty,I_j}
&\lesssim \sum^k_{m=k-i-1}|\partial_t\beta^j_{i+1,m}|\lesssim h\sum^k_{m=k-i}|\partial_t\gamma^j_{i,m}|\\
&\lesssim h\|\partial_tw_{q,i}\|_{\infty}\lesssim h^{k+i+2}\|\partial_tq\|_{k+i+1,\infty}\lesssim h^{k+i+2}\|u\|_{k+i+4,\infty}.
\end{align*}
 Analogously, the other estimate of \eqref{6} can be obtained, and the orthogonality property in \eqref{7} is a trivial consequence of expression of $w_{u,i}$ and $w_{q,i}$ in \eqref{17} with $i$ replaced by $i+1$. This finishes the proof of Lemma \ref{w}.
\end{proof}

We are now ready to define the correction functions as follows. For any positive integer $\ell\in \mathbb{Z}_k$, we define in each element $I_j$
\begin{equation} \label{8}
W^\ell_u=\sum^\ell_{i=1}w_{u,i},\ \ \ \ \ W^\ell_q=\sum^\ell_{i=1}w_{q,i},
\end{equation}
and the special interpolation functions are
\begin{equation} \label{9}
u^\ell_I=P_\theta u-W^\ell_u,\ \ \ \ \ q^\ell_I=P_{\tilde{\theta}} q-W^\ell_q.
\end{equation}

\begin{lemma}\label{ww}
Suppose $u\in W^{k+\ell+3,\infty}(\Omega), \ell\in \mathbb{Z}_k$ is the solution of \eqref{1}, and $u^\ell_I, q^\ell_I$ are defined by (\ref{9}), then for $\forall v, \psi \in V^k_h$, we have

\begin{subequations}\label{10}
\begin{equation}
|\big((u-u^\ell_I)_t,v\big)_j-(W^\ell_u,v_x)_j+(W^\ell_q,v_x)_j|\lesssim h^{k+\ell+1}\|u\|_{k+\ell+3,\infty}\|v\|_{1, I_j},
\end{equation}
\begin{equation}
~~~~~~~~~~~~~~~~~~~~|(q-q^\ell_I,\psi)_j+(W^\ell_u,\psi_x)_j|\lesssim h^{k+\ell+1}\|u\|_{k+\ell+2,\infty}\|\psi\|_{1, I_j}\label{11}.
\end{equation}
\end{subequations}
\end{lemma}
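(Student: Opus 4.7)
The plan is to reduce each left-hand side to a single ``residual'' term by a telescoping procedure based on the defining recurrence \eqref{28}, and then to apply the $L^\infty$ estimates of Lemma~\ref{w}. Writing $u - u^\ell_I = \sum_{i=0}^\ell w_{u,i}$ and $q - q^\ell_I = \sum_{i=0}^\ell w_{q,i}$, each left-hand side splits into a sum over $i$, and the recurrence furnishes a mechanism whereby the summand at level $i+1$ cancels a companion at level $i$.

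Concretely, for $v \in V^k_h$ one has $v_x \in P^{k-1}(I_j)$, so the definitions \eqref{5} and \eqref{4} of $w_{u,i+1}$ and $w_{q,i+1}$ may be tested with $v_x$. Using $\bar h_j \partial_x D_x^{-1} f = f$ together with $D_x^{-1} f(x_{j-1/2}) = 0$, a single integration by parts delivers
\begin{align*}
(w_{u,i+1}, v_x)_j + (w_{q,i}, v)_j &= \bar h_j D_x^{-1} w_{q,i}(x_{j+\frac{1}{2}})\,v^-_{j+\frac{1}{2}},\\
(w_{q,i+1}, v_x)_j - (w_{u,i+1}, v_x)_j + (\partial_t w_{u,i}, v)_j &= \bar h_j D_x^{-1} \partial_t w_{u,i}(x_{j+\frac{1}{2}})\,v^-_{j+\frac{1}{2}}.
\end{align*}
Summing each identity over $i = 0, \dots, \ell-1$ yields the telescoped expressions
\begin{align*}
((u-u^\ell_I)_t, v)_j - (W^\ell_u, v_x)_j + (W^\ell_q, v_x)_j &= (\partial_t w_{u,\ell}, v)_j + \sum_{i=0}^{\ell-1} \bar h_j D_x^{-1} \partial_t w_{u,i}(x_{j+\frac{1}{2}})\, v^-_{j+\frac{1}{2}},\\
(q - q^\ell_I, \psi)_j + (W^\ell_u, \psi_x)_j &= (w_{q,\ell}, \psi)_j + \sum_{i=0}^{\ell-1} \bar h_j D_x^{-1} w_{q,i}(x_{j+\frac{1}{2}})\, \psi^-_{j+\frac{1}{2}}.
\end{align*}

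The most delicate step, and what I expect to be the main obstacle, is showing that the boundary sums in fact vanish identically. The key observation is the elementary identity $\bar h_j D_x^{-1} f(x_{j+1/2}) = \int_{x_{j-1/2}}^{x_{j+1/2}} f\,d\tau = (f, 1)_j$, so each boundary contribution is a zeroth moment of $w_{q,i}$ or $\partial_t w_{u,i}$ over $I_j$. For $1 \le i \le \ell-1 \le k-1$, the orthogonality \eqref{7} applied with the test function $1 \in P^{k-i-1}(I_j)$ gives $(w_{u,i}, 1)_j = (w_{q,i}, 1)_j = 0$; for $i = 0$ the defining property \eqref{18} of $P_\theta$ and $P_{\tilde\theta}$ together with $k \ge 1$ gives $(w_{u,0}, 1)_j = (u - P_\theta u, 1)_j = 0$ and $(w_{q,0}, 1)_j = 0$. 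Hence every boundary contribution disappears, and only the interior residuals $(\partial_t w_{u,\ell}, v)_j$ and $(w_{q,\ell}, \psi)_j$ survive. These are controlled by Cauchy--Schwarz on $I_j$, the elementary inequality $\|g\|_{L^2(I_j)} \le h^{1/2}\|g\|_{\infty, I_j}$, and the sup-norm estimates \eqref{6}, which yields bounds of order $h^{k+\ell+3/2}$ for both quantities. These are sharper than the stated rate $h^{k+\ell+1}$, so \eqref{10} follows.
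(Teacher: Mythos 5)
Your proposal is correct and follows essentially the same route as the paper: decompose via the recurrence \eqref{28}, integrate by parts through $D_x^{-1}$, kill the boundary contributions using the vanishing zeroth moments $(w_{u,i},1)_j=(w_{q,i},1)_j=0$, telescope to isolate $(\partial_t w_{u,\ell},v)_j$ and $(w_{q,\ell},\psi)_j$, and conclude with the sup-norm bounds of Lemma \ref{w}. The only cosmetic difference is that the paper first observes $D_x^{-1}w_{u,i}(x^\pm_{j\mp 1/2})=0$ so that the integration by parts carries no boundary terms at all, whereas you keep the boundary terms explicit and cancel them afterwards.
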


\begin{proof}\quad By the orthogonality property of $w_{u,i}\ \text{and}\ w_{q,i}, i\in\mathbb{Z}_{k-1}$, we have
\begin{align*}
&D^{-1}_xw_{u,i}(x^-_{j+\frac{1}{2}})=\frac{1}{\bar{h}_j}(w_{u,i},1)_j=0=D^{-1}_xw_{u,i}(x^+_{j-\frac{1}{2}}),\\
&D^{-1}_xw_{q,i}(x^-_{j+\frac{1}{2}})=\frac{1}{\bar{h}_j}(w_{q,i},1)_j=0=D^{-1}_xw_{q,i}(x^+_{j-\frac{1}{2}}).
\end{align*}
It follows from integration by parts that
\begin{align*}
&(\partial_tw_{u,i},v)_j=-\bar{h}_j(D^{-1}_x\partial_tw_{u,i},v_x)_j=-(w_{q,i+1}-w_{u,i+1},v_x),\ v\in V^k_h,\ i\in\mathbb{Z}_{k-1},\\
&(w_{q,i},v)_j=-\bar{h}_j(D^{-1}_xw_{q,i},v_x)_j=-(w_{u,i+1},v_x),\ \ \ \ \ \ \ \ ~~~~~~~~~~v\in V^k_h,\ i\in\mathbb{Z}_{k-1}.
\end{align*}
Then
\begin{align*}
\ &\big((u-u^\ell_I)_t,v\big)_j-(W^\ell_u,v_x)_j+(W^\ell_q,v_x)_j\\
=&\big((u-P_{\theta}u)_t,v\big)_j+\sum^\ell_{i=1}[(\partial_tw_{u,i},v)_j+(w_{q,i}-w_{u,i},v_x)_j]\\
=&(\partial_tw_{u,\ell},v)_j.
\end{align*}
Similarly, there holds
\begin{align*}
(q-q^\ell_I,\psi)_j+(W^\ell_u,\psi_x)_j=(w_{q,\ell},\psi)_j.
\end{align*}
By \eqref{6}, we can get the desired result (\ref{10}).
\end{proof}

\textcolor{red}{\subsection{The case with $\lambda \ne \theta$ in \eqref{23}}}

When \textcolor{red}{parameters $\lambda$ and $\theta$}  in \eqref{23} pertaining to convection and diffusion terms are chosen differently,  a pair of suitable interpolation functions in possession of supercloseness property should be constructed, which are based on a combination of modified projections and new correction functions. To do that, let us first recall a new modified projection\cite{Cheng1}. That is,
\begin{align*}
\Pi_h(u,q)=(P_\theta u, P^*_{\tilde{\theta}}q),
\end{align*}
in which $P_\theta u\in V^k_h$ has been given in \eqref{18}, and $P^*_{\tilde{\theta}}q\in V^k_h$ depends on both $u$ and $q$ satisfying
\begin{align*}
&\int_{I_j}(P^*_{\tilde{\theta}}q)v_h dx=\int_{I_j}qv_h dx,\ \ \ \ \forall v_h\in P^{k-1}(I_j),\\
&(P^*_{\tilde{\theta}}q)^{(\tilde{\theta})}_{j+\frac{1}{2}}
=(q^{(\tilde{\theta})})_{j+\frac{1}{2}}+(\lambda-\theta)[u-P_\theta u]_{j+\frac{1}{2}}
\end{align*}
for any $j=1,\cdots,N$. Moreover, this projection have the following approximation property
\begin{align*}
\|q-P^*_{\tilde{\theta}}q\|_{I_j}\leq Ch^{k+\frac{3}{2}}\Big(\|q\|_{k+1,\infty}+|\lambda-\theta|
\cdot\|u\|_{k+1,\infty}\Big).
\end{align*}
From the above estimate of $q-P^*_{\tilde{\theta}}q$, it is easy to see that the coefficient $\bar{q}_{j.k}$ can be controlled by the prime and auxiliary variables, namely
\begin{align*}
|\bar{q}_{j,k}|&\lesssim \frac{2k+1}{h_j}|(q-P^*_{\tilde{\theta}}q,L_{j,k})|\\
&\lesssim h^{k+1}\Big(\|q\|_{k+1,\infty}+|\lambda-\theta|\cdot\|u\|_{k+1,\infty}\Big)\\
& \lesssim h^{k+1}\|u\|_{k+2,\infty}.
\end{align*}

Next, the corresponding correction functions pertaining to two different weights $\lambda$ and $\theta$ can be easily defined. Specifically,
we define the functions $w_{u,i}, w_{q,i}, i\in\mathbb{Z}_k$ satisfying
\begin{subequations}\label{29}
\begin{align}
&(w_{u,i}-\bar{h}_jD_x^{-1}w_{q,i-1},z)_j=0,\ \ \ \ \ \ \ \ \ \ \ \ \ \ \big(w^{(\theta)}_{u,i}\big)_{{j+\frac{1}{2}}}=0,\\
&(w_{q,i}-w_{u,i}-\bar{h}_jD_x^{-1}\partial_tw_{u,i-1},z)_j=0,\ \ \ \big(w^{(\tilde{\theta})}_{q,i}\big)_{{j+\frac{1}{2}}}=\big(w^{(\lambda)}_{u,i}\big)_{{j+\frac{1}{2}}},\label{31}
\end{align}
\end{subequations}
where $z\in P^{k-1}(I_j)$, and $
w_{u,0}=u-P_\theta u, w_{q,0}=q-P^*_{\tilde{\theta}} q.
$

\textcolor{green}
{
Let us finish this section by providing the following theorem.
\begin{theorem}\label{30}
When $\lambda\neq \theta$ in \eqref{23}, the functions $w_{u,i}, w_{q,i}, i\in\mathbb{Z}_k$ defined in (\ref{29}) still have the following properties
\begin{align*}
&\|\partial_tw_{u,i}\|_{\infty}\lesssim h^{k+i+1}\|u\|_{k+i+3,\infty},~~~ \|w_{q,i}\|_{\infty}\lesssim h^{k+i+1}\|u\|_{k+i+2,\infty}, \\
&(w_{u,i},v)_j=0,\ \qquad (w_{q,i},v)_j=0,\ \qquad \forall v\in P^{k-i-1}(I_j).
\end{align*}
Moreover, when $u\in W^{k+\ell+3,\infty}(\Omega), \ell\in \mathbb{Z}_k$, the special interpolation functions
$$
u^\ell_I=P_\theta u-W^\ell_u,\ \ \ \ \ q^\ell_I=P^*_{\tilde{\theta}} q-W^\ell_q
$$
with \eqref{8} satisfy
\begin{align*}
|\big((u-u^\ell_I)_t,v\big)_j-(W^\ell_u,v_x)_j+(W^\ell_q,v_x)_j|&\lesssim h^{k+\ell+1}\|u\|_{k+\ell+3,\infty}\|v\|_{1, I_j}, \\
|(q-q^\ell_I,\psi)_j+(W^\ell_u,\psi_x)_j| & \lesssim h^{k+\ell+1}\|u\|_{k+\ell+2,\infty}\|\psi\|_{1, I_j}.
\end{align*}
\end{theorem}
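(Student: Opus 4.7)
The plan is to mirror the proofs of Lemma~\ref{w} and Lemma~\ref{ww} in the $\lambda \ne \theta$ setting, carefully tracking the two changes: the initial data $w_{q,0} = q - P^*_{\tilde\theta} q$ now uses the modified projection, and the boundary condition for $w_{q,i}$ in \eqref{31} is non-homogeneous, carrying the extra term $\big(w^{(\lambda)}_{u,i}\big)_{j+1/2}$. Since both $P_\theta$ and $P^*_{\tilde\theta}$ are $L^2$-orthogonal to $P^{k-1}(I_j)$, and since the bound $|\bar q_{j,k}| \lesssim h^{k+1}\|u\|_{k+2,\infty}$ has already been derived just above \eqref{29} using the approximation property of $P^*_{\tilde\theta}$, the ansatz $w_{u,i}|_{I_j} = \sum_{m=k-i}^{k} \beta^j_{i,m} L_{j,m}$ and $w_{q,i}|_{I_j} = \sum_{m=k-i}^{k} \gamma^j_{i,m} L_{j,m}$ persists unchanged, and the orthogonality assertion is an immediate consequence of this expansion.

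I would then proceed by induction on $i$ just as in Lemma~\ref{w}. In the base case $i = 1$, the derivation of the low-order coefficients $\beta^j_{1,k-1}$ and $\gamma^j_{1,k-1}$ from the $L^2$-orthogonality parts of \eqref{29} is identical to the one in Section~\ref{same}, and the circulant system $A\beta_{1,k} = b$ determining $\beta^j_{1,k}$ is unaffected. The only genuine novelty appears in the system for $\gamma^j_{1,k}$, which now reads
\[
\tilde A \gamma_{1,k} = \tilde b + \tilde c, \qquad \tilde c_\ell = \big(w^{(\lambda)}_{u,1}\big)_{\ell+\frac{1}{2}},
\]
with the extra vector $\tilde c$ arising from the right-hand side of \eqref{31}. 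By the freshly obtained bound $\|w_{u,1}\|_\infty \lesssim h^{k+2}\|u\|_{k+2,\infty}$, we have $\|\tilde c\|_\infty \lesssim h^{k+2}\|u\|_{k+2,\infty}$, which matches the order of $\tilde b$. Since $\tilde A$ is invertible for $\tilde\theta \ne 1/2$, Lemma~\ref{27} delivers the claimed bound on $\gamma^j_{1,k}$, and hence on $\|w_{q,1}\|_\infty$.

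The inductive step is analogous: assuming the bounds at level $i$, the recurrence for $\beta^j_{i+1,m}$ and $\gamma^j_{i+1,m}$ in terms of the previous Legendre coefficients via \eqref{primal} is identical to Lemma~\ref{w}, while the circulant system for $\gamma_{i+1,k}$ again picks up an extra right-hand contribution of the form $\big(w^{(\lambda)}_{u,i+1}\big)_{j+1/2}$, controlled by the bound on $\|w_{u,i+1}\|_\infty$ established in the first half of the same step. I expect this inhomogeneous jump term to be the only genuinely new technical point; the rest of the argument is essentially a copy of Lemma~\ref{w} with the extra inhomogeneity folded into the right-hand side estimates.

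For the second half of the theorem, the proof of Lemma~\ref{ww} transfers almost verbatim. The orthogonality assertion with $v = 1$ gives $(w_{u,i},1)_j = (w_{q,i},1)_j = 0$ for $1 \le i \le k-1$, so $D^{-1}_x w_{u,i}$ and $D^{-1}_x w_{q,i}$ vanish at $x^\pm_{j\pm 1/2}$; integration by parts then yields $(w_{q,i},\psi)_j = -(w_{u,i+1},\psi_x)_j$ and $(\partial_t w_{u,i},v)_j = -(w_{q,i+1}-w_{u,i+1},v_x)_j$ for $v,\psi\in V^k_h$. Telescoping exactly as in Lemma~\ref{ww} reduces the expressions to $(\partial_t w_{u,\ell},v)_j$ and $(w_{q,\ell},\psi)_j$, with the $i = 0$ boundary contributions canceling because $P_\theta$ and $P^*_{\tilde\theta}$ both annihilate $P^{k-1}$ in the $L^2$ sense. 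The desired inequalities then follow from the bounds on $\|\partial_t w_{u,\ell}\|_\infty$ and $\|w_{q,\ell}\|_\infty$ proved above.
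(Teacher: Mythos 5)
Your proposal is correct and takes essentially the same route as the paper: the paper itself omits the details, stating only that Theorem~\ref{30} follows by the argument of subsection~\ref{same} with modified right-hand-side vectors $b$, $\tilde b$ and $c$, and your write-up supplies exactly those details. In particular, you correctly isolate the single new ingredient --- the inhomogeneous jump term $\big(w^{(\lambda)}_{u,i}\big)_{j+\frac{1}{2}}$ from \eqref{31} entering the circulant system for $\gamma_{i,k}$, controlled at the right order by the already-established bound on $\|w_{u,i}\|_\infty$ --- while observing that the telescoping argument of Lemma~\ref{ww} is untouched since it relies only on the $L^2$-orthogonality parts of \eqref{29}.
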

\begin{proof}
Since there is only slight difference between \eqref{28} and \eqref{29} in terms of different boundary collocations, Theorem \ref{30} can thus be proved by an argument similar to that in subsection \ref{same} with different vectors $b$, $\tilde b$ and $c$, etc. The detailed proof is omitted.
\end{proof}
}

\section{Superconvergence}\label{super}

In this section, we will show the superconvergence properties for the LDG solution at some special points as well as cell averages, which are mainly based on the supercloseness result for the error between the LDG solution $(u_h,q_h)$ and newly designed interpolation functions $(u^\ell_I,q^\ell_I)$.
\begin{theorem}\label{superclose}
Let $u\in W^{k+\ell+3,\infty}(\Omega)$, $\ell\in \mathbb{Z}_k$ is the exact solution of \eqref{1}, and $u_h, q_h$ are the numerical solutions of LDG scheme (\ref{2}), Then for periodic boundary conditions
\begin{align*}
\|u^\ell_I-u_h\|+\Big(\int^t_0\|q^\ell_I-q_h\|^2d\tau\Big)^{\frac{1}{2}}\le C (1+t)h^{k+\ell+1},
\end{align*}
where $C$ depends on $\|u\|_{k+\ell+3,\infty}$.
\end{theorem}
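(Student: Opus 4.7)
The plan is to derive an energy estimate for $(\xi_u,\xi_q)=(u^\ell_I-u_h,q^\ell_I-q_h)$ starting from the split error equation \eqref{24}. Taking $v=\xi_u$, $\psi=\xi_q$ and summing over $j$, the jump term $(\lambda-\tfrac12)\sum_j[\xi_u]^2_{j+1/2}$ on the left is nonnegative by stability and may be dropped in the upper bound. All the work then consists of showing that the right-hand side is controlled by $C h^{k+\ell+1}(\|\xi_u\|+\|\xi_q\|)$; the estimate for the $L^2$-in-time norm of $\xi_q$ is then immediate from the identity, while the bound on $\|\xi_u\|$ follows by integrating in time.

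To bound the right-hand side, the crucial reductions are the following. First, since $P_\theta$ (and the modified projection $P^*_{\tilde\theta}$ used in the $\lambda\ne\theta$ case) is orthogonal to $P^{k-1}(I_j)$ and $v_x,\psi_x\in P^{k-1}(I_j)$, one has $(\eta_u,v_x)_j=(W^\ell_u,v_x)_j$ and $(\eta_q,\psi_x)_j=(W^\ell_q,\psi_x)_j$. Second, the numerical-flux boundary contributions from $\mathcal{H}^1$ and $\mathcal{H}^2$, after summing over $j$ and using periodicity, collapse to $\sum_j(\hat\eta_q-\tilde\eta_u)[\xi_u]|_{j+1/2}+\sum_j\hat\eta_u[\xi_q]|_{j+1/2}$. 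Both sums vanish: the boundary constraints $(w_{u,i})^{(\theta)}=0$ together with $(P_\theta u-u)^{(\theta)}=0$ yield $\hat\eta_u=(\eta_u)^{(\theta)}=0$; when $\lambda=\theta$ the same identity also gives $\tilde\eta_u=0$ and $\hat\eta_q=(\eta_q)^{(\tilde\theta)}=0$, while for $\lambda\ne\theta$ the extra $(\lambda-\theta)[u-P_\theta u]$ term built into $P^*_{\tilde\theta}$ combined with the modified boundary condition \eqref{31} produces an exact cancellation $\hat\eta_q-\tilde\eta_u=0$. Collecting the remaining volume parts and invoking the compact identities established in Lemma \ref{ww}/Theorem \ref{30}, the right-hand side reduces to $-(\partial_t w_{u,\ell},\xi_u)-(w_{q,\ell},\xi_q)$, which by Lemma \ref{w} and Cauchy--Schwarz is bounded by $C h^{k+\ell+1}(\|\xi_u\|+\|\xi_q\|)$.

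To close the argument and obtain the linear-in-$t$ factor $(1+t)$ rather than an exponential one, I would \emph{not} apply pointwise Gronwall. Instead, integrate
\begin{equation*}
\tfrac{1}{2}\tfrac{d}{dt}\|\xi_u\|^2+\|\xi_q\|^2\le C h^{k+\ell+1}(\|\xi_u\|+\|\xi_q\|)
\end{equation*}
over $[0,t]$, use Young's inequality $Ch^{k+\ell+1}\|\xi_q\|\le\tfrac12\|\xi_q\|^2+\tfrac12 C^2h^{2(k+\ell+1)}$ to absorb the $\xi_q$ term, and bound $\int_0^t\|\xi_u\|\,d\tau\le t\,\max_{[0,t]}\|\xi_u\|$. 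Another application of Young's to $Ch^{k+\ell+1}\,t\,\max\|\xi_u\|$ absorbs $\tfrac14\max\|\xi_u\|^2$ on the left, leaving $\max\|\xi_u\|+(\int_0^t\|\xi_q\|^2)^{1/2}\lesssim \|\xi_u(0)\|+(1+t)h^{k+\ell+1}$. Choosing the numerical initial data so that $\|\xi_u(0)\|\lesssim h^{k+\ell+1}$ (for instance $u_h(\cdot,0)=u^\ell_I(\cdot,0)$, or the refined initialization promised in Section \ref{extension}) then yields the claimed estimate.

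The main obstacle I expect is the bookkeeping of the boundary cancellations in the $\lambda\ne\theta$ setting: one must carefully track how $(\lambda-\theta)[u-P_\theta u]$ enters through $P^*_{\tilde\theta}$ and through the coupled collocation $(w_{q,i})^{(\tilde\theta)}=(w_{u,i})^{(\lambda)}$, and verify that these contributions precisely match to annihilate $\hat\eta_q-\tilde\eta_u$ at every interface. A secondary subtlety is insisting on the integrated estimate to get linear growth in $t$ rather than the exponential growth that a naive Gronwall argument would produce.
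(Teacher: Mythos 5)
Your proposal is correct and follows essentially the same route as the paper: the energy identity \eqref{24}, the reduction of the right-hand side via orthogonality of the projections and the boundary collocations of the $w_{u,i},w_{q,i}$ to the compact form $(\partial_t w_{u,\ell},\xi_u)_j+(w_{q,\ell},\xi_q)_j$ provided by Lemma \ref{ww} (and Theorem \ref{30} for $\lambda\ne\theta$), followed by Cauchy--Schwarz and the choice $u_h(\cdot,0)=u^\ell_I(\cdot,0)$. Your explicit time integration with the $\max_{[0,t]}\|\xi_u\|$ absorption is a slightly more careful way to obtain the linear $(1+t)$ factor than the paper's appeal to ``Young's inequality and Gronwall inequality,'' and your unpacking of the interface cancellation $\hat\eta_q-\tilde\eta_u=0$ in the $\lambda\ne\theta$ case correctly supplies details the paper omits.
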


\begin{proof}
Using Lemma \ref{ww}, we obtain
\begin{align*}
&\quad \!~\big|(\eta_{ut},v)_j+(\eta_q,\psi)_{j}+\mathcal{H}^1_j(\eta_u,\eta_q;v)
+\textcolor{green}{\mathcal{H}^2_j(\eta_u;\psi)}\big|\\
&=\big|\big((u-u^\ell_I)_t,v\big)_j-(W^\ell_u,v_x)_j+(W^\ell_q,v_x)_j
+(q-q^\ell_I,\psi)_j+(W^\ell_u,\psi_x)_j\big|\\
&=\big|(\partial_tw_{u,\ell},v)_j+(w_{q,\ell},\psi)_j|\\
&\lesssim  h^{k+\ell+1}\|u\|_{k+\ell+3,\infty}(\|v\|_{1, I_j}+\|\psi\|_{1, I_j}).
\end{align*}
Inserting the above estimate into \eqref{24} and summing over all $j$, one has
\begin{align*}
\frac{1}{2}\frac{d}{dt}\|\xi_u\|^2+\|\xi_q\|^2
\lesssim  h^{k+\ell+1}\|u\|_{k+\ell+3,\infty}(\|\xi_u\|+\|\xi_q\|).
\end{align*}
If we choose a suitable initial condition such that $$\|\xi_u(0)\|=0,$$ then Theorem \ref{superclose} follows by using Young's inequality and Gronwall inequality.
\end{proof}

\subsection{Superconvergence of numerical fluxes}
In this subsection, we present the superconvergence results of the numerical fluxes.

\begin{theorem}\label{nt}
Assume that $u\in W^{2k+3,\infty}(\Omega), k\geq 1$ is the solution of \eqref{1}, and the $u_h, q_h$ are the numerical solutions of LDG scheme (\ref{2}) with the initial solution $u_h(\cdot,0)=u^k_I(\cdot,0)$. Then for periodic boundary conditions
$$
\|e_{u,n}\| \lesssim C (1+t)h^{2k+1}, \ \ \ \ \  \textcolor{red}{\Big(\int^t_0\|e_{q,n}\|^2d\tau\Big)^{\frac{1}{2}} \lesssim C (1+t)h^{2k+1},}
$$
where
\begin{align*}
\|e_{v,n}\|=\Big(\frac{1}{N}\sum^N_{j=1}
\big|(v-\hat{v}_h)(x_{j+\frac{1}{2}},t)\big|^2\Big)^\frac{1}{2}, \ \ \ v=u,q.
\end{align*}
\end{theorem}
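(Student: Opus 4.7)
The plan is to reduce the pointwise errors at element interfaces to $L^2$ norms of the supercloseness residuals $\xi_u = u^k_I - u_h$ and $\xi_q = q^k_I - q_h$, and then invoke Theorem \ref{superclose} with $\ell = k$.

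\textbf{Step 1: pointwise reduction at interfaces.} Since $u$ and $q$ are smooth, $u(x_{j+\frac{1}{2}}) = u^{(\theta)}_{j+\frac{1}{2}}$ and $q(x_{j+\frac{1}{2}}) = q^{(\tilde\theta)}_{j+\frac{1}{2}}$. Exploiting the generalized Gauss--Radau collocation $(u-P_\theta u)^{(\theta)}_{j+\frac{1}{2}} = 0$, I rewrite
\[
(u - \hat u_h)(x_{j+\frac{1}{2}}) = (P_\theta u - u_h)^{(\theta)}_{j+\frac{1}{2}}.
\]
Substituting $P_\theta u - u_h = W^k_u + \xi_u$ (from $u^k_I = P_\theta u - W^k_u$) and invoking the boundary collocation $(w_{u,i})^{(\theta)}_{j+\frac{1}{2}} = 0$ of \eqref{5} (identical in \eqref{29}) to annihilate the correction piece yields
\[
(u - \hat u_h)(x_{j+\frac{1}{2}}) = (\xi_u)^{(\theta)}_{j+\frac{1}{2}}.
\]
When $\lambda = \theta$, the symmetric argument with the projection $P_{\tilde\theta}$ and the collocation $(w_{q,i})^{(\tilde\theta)}_{j+\frac{1}{2}} = 0$ from \eqref{4} gives $(q - \hat q_h)(x_{j+\frac{1}{2}}) = (\xi_q)^{(\tilde\theta)}_{j+\frac{1}{2}}$.

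\textbf{Step 2: inverse inequality and discrete summation.} The standard trace-inverse estimate $|v_h(x_{j+\frac{1}{2}}^\pm)|^2 \lesssim h^{-1}\|v_h\|^2_{I_j \cup I_{j+1}}$ for $v_h \in V^k_h$, combined with quasi-uniformity $Nh \sim 1$, delivers
\[
\|e_{u,n}\|^2 = \frac{1}{N}\sum_{j=1}^N \bigl|(\xi_u)^{(\theta)}_{j+\frac{1}{2}}\bigr|^2 \lesssim \frac{1}{Nh}\|\xi_u\|^2 \lesssim \|\xi_u\|^2,
\]
and analogously $\|e_{q,n}\|^2 \lesssim \|\xi_q\|^2$. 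Theorem \ref{superclose} with $\ell = k$ (its initial hypothesis $\|\xi_u(0)\| = 0$ being secured by the prescribed initialization $u_h(\cdot,0) = u^k_I(\cdot,0)$) then supplies $\|\xi_u(t)\| \lesssim (1+t)h^{2k+1}$ together with $\bigl(\int_0^t\|\xi_q\|^2 d\tau\bigr)^{1/2} \lesssim (1+t)h^{2k+1}$. Taking square roots and integrating in time completes the argument.

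\textbf{Expected main obstacle.} The cleanest version of the argument assumes $\lambda = \theta$. When $\lambda \neq \theta$, the modified projection from Theorem \ref{30} satisfies $(q - P^*_{\tilde\theta}q)^{(\tilde\theta)}_{j+\frac{1}{2}} = -(\lambda-\theta)[u-P_\theta u]_{j+\frac{1}{2}}$ and \eqref{31} gives $(w_{q,i})^{(\tilde\theta)}_{j+\frac{1}{2}} = (w_{u,i})^{(\lambda)}_{j+\frac{1}{2}}$ instead of vanishing. Applying the algebraic identity $v^{(\lambda)} - v^{(\theta)} = -(\lambda-\theta)[v]$ collapses these extra interface contributions into a single jump term $-(\lambda-\theta)[u - u^k_I]_{j+\frac{1}{2}}$, and showing that this jump, in discrete $L^2$, is of order $h^{2k+1}$ is the technically most demanding step, since it is not small by approximation theory alone but depends on a refined cancellation encoded in the definitions of $w_{u,i}$ and $w_{q,i}$.
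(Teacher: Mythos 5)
Your argument is essentially the paper's own proof: reduce $(v-\hat v_h)(x_{j+\frac{1}{2}})$ to the trace of $\xi_v$ via the collocation properties of $P_\theta$ (resp.\ $P_{\tilde\theta}$) and of the correction functions, apply the trace-inverse inequality with $Nh\sim 1$, and invoke Theorem \ref{superclose} with $\ell=k$; the initialization $u_h(\cdot,0)=u^k_I(\cdot,0)$ supplies $\|\xi_u(0)\|=0$ exactly as you note. Your ``expected obstacle'' for $\lambda\neq\theta$ is real but moot here: the paper's own Remark at the end of Section \ref{super} concedes that the $q$-superconvergence is only claimed for $\lambda=\theta$, precisely because $\big(W^\ell_q\big)^{(\tilde\theta)}_{j+\frac{1}{2}}\neq 0$ otherwise.
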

\begin{proof}
It follows from the inverse inequality and the supercloseness result in Theorem \ref{superclose} that
\begin{align*}
\|e_{u,n}\|
&=\Big(\frac{1}{N}\sum^N_{j=1}\big|(\hat{u}_I-\hat{u}_h)(x_{j+\frac{1}{2}},t)
\big|^2\Big)^\frac{1}{2}\\
&\lesssim \Big(\frac{1}{N}\sum^N_{j=1}h_j^{-1}\|u_I-u_h\|_{I_j \cup I_{j+1}}^2\Big)^\frac{1}{2} \\
& \lesssim \|u^k_I-u_h\| \lesssim  C (1+t)h^{2k+1}.
\end{align*}
\textcolor{red}{Using the supercloseness result in Theorem \ref{superclose} again, superconvergence of the auxiliary variable $q$ can be derived analogously.} This finishes the proof of Theorem \ref{nt}.
\end{proof}
\subsection{Superconvergence for cell averages}
\begin{theorem}
Assume that the conditions of Theorem \ref{superclose} are satisfied, then for the periodic boundary conditions, we have
\begin{equation}\label{cell}
\|e_u\|_c\lesssim (1+t)h^{2k+1}\|u\|_{2k+3,\infty},\ \ \ \ \textcolor{red}{\Big(\int^t_0\|e_q\|^2_cd\tau\Big)^{\frac{1}{2}}\lesssim (1+t)h^{2k+1}\|u\|_{2k+3,\infty},}
\end{equation}
where
$$
\|e_v\|_c=\Big(\frac{1}{N}\sum^N_{j=1}
\big(\frac{1}{h_j}(e_v,1)_j\big)^2\Big)^\frac{1}{2},\ \ \ v=u,q.
$$
\end{theorem}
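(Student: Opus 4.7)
The plan is to decompose each error using the interpolation functions $u_I^k$ and $q_I^k$ from section \ref{correction} with $\ell=k$, writing $e_u=\eta_u+\xi_u$ with $\eta_u=u-u_I^k$, $\xi_u=u_I^k-u_h$, and analogously for $e_q$, and then to bound the two pieces separately. The key mechanism behind the doubling of the convergence rate from $k+1$ to $2k+1$ is that on each cell the contribution of $(\eta_u,1)_j$ reduces to just the top-mode correction $w_{u,k}$, every other piece being killed by orthogonality.

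For the projection-plus-correction part, testing the defining relation \eqref{18} of $P_\theta$ against $1\in P^{k-1}(I_j)$ (legal for $k\ge 1$) gives $(u-P_\theta u,1)_j=0$, while \eqref{7} in Lemma \ref{w}, or its counterpart in Theorem \ref{30} when $\lambda\ne\theta$, gives $(w_{u,i},1)_j=0$ for every $i\le k-1$. Hence $(\eta_u,1)_j=(w_{u,k},1)_j$, and the sharp bound $\|w_{u,k}\|_{\infty,I_j}\lesssim h^{2k+1}\|u\|_{2k+1,\infty}$, obtained by the same induction as in the proof of Lemma \ref{w} but applied to the primal $w_{u,k}$ rather than its time derivative, yields $\frac{1}{h_j}|(\eta_u,1)_j|\lesssim h^{2k+1}\|u\|_{2k+1,\infty}$. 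The analogous treatment of $\eta_q$, using the $L^2$ orthogonality of $P_{\tilde\theta}$ (or $P^*_{\tilde\theta}$) and the estimate $\|w_{q,k}\|_\infty\lesssim h^{2k+1}\|u\|_{2k+2,\infty}$ from \eqref{6}, gives the pointwise-in-time bound $\frac{1}{h_j}|(\eta_q,1)_j|\lesssim h^{2k+1}\|u\|_{2k+2,\infty}$.

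For the supercloseness contribution, Cauchy--Schwarz on each cell gives $|(\xi_u,1)_j|\le h_j^{1/2}\|\xi_u\|_{I_j}$, so that
\[
\Big(\frac{1}{N}\sum_{j=1}^N\Big(\frac{1}{h_j}(\xi_u,1)_j\Big)^2\Big)^{1/2}\lesssim (Nh)^{-1/2}\|\xi_u\|\lesssim \|\xi_u\|,
\]
since $Nh$ is comparable to the length of $\Omega$ under quasi-uniformity; invoking Theorem \ref{superclose} with $\ell=k$ then gives $\|\xi_u\|\lesssim(1+t)h^{2k+1}$, which together with the $\eta_u$ bound completes the estimate for $\|e_u\|_c$. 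The bound for the time-integrated $\|e_q\|_c$ is structurally identical: the $\eta_q$ part integrated in time yields an $O((1+t)^{1/2}h^{2k+1})$ contribution, while the $\xi_q$ piece, after the same Cauchy--Schwarz reduction, is controlled by $\bigl(\int_0^t\|\xi_q\|^2 d\tau\bigr)^{1/2}$, which the second half of Theorem \ref{superclose} bounds by $(1+t)h^{2k+1}$. The main technical obstacle is purely bookkeeping: verifying the $h^{2k+1}$ estimate for $\|w_{u,k}\|_\infty$ by extending the induction of Lemma \ref{w} from time-differentiated quantities to primal ones, with no new idea required.
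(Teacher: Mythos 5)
Your proposal is correct and follows essentially the same route as the paper: decompose $(e_u,1)_j=(u^k_I-u_h,1)_j+(W^k_u,1)_j$ (the $u-P_\theta u$ moment vanishing by \eqref{18}), kill $(w_{u,i},1)_j$ for $i\le k-1$ via the orthogonality \eqref{7}, bound the surviving $w_{u,k}$ term by the sup-norm estimate from the induction of Lemma \ref{w}, and control the $\xi$-part by Cauchy--Schwarz together with Theorem \ref{superclose} (with the time-integrated version for $q$). Your explicit treatment of the $\|w_{u,k}\|_\infty$ bound and of the auxiliary variable merely fills in details the paper leaves implicit.
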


\begin{proof}
\textcolor{red}{ Taking $\|e_u\|_c$ as an example,}
by the properties of $P_\theta$ and the definition of $u^k_I$, we obtain
\begin{equation}
(e_u, 1)_j = (u^k_I-u_h, 1)_j+\ (W^k_u, 1)_j .\label{12}
\end{equation}
The superconvergent result can thus be proved by using the orthogonality property in \eqref{7}, Cauchy--Schwarz inequality and Theorem \ref{superclose}.
\end{proof}

\subsection{Superconvergence at  generalized Radau points}

As a natural extension of Radau points for $\theta = 1$, the roots of generalized Radau polynomials for the weight $\theta$ are introduced in \cite{Ryan1}. To be more specific, the generalized Radau polynomials are defined as
\begin{equation}\label{0920}
R^*_{k+1}=\begin{cases}
L_{k+1}-(2\theta-1)L_k,\ \ \ \ \ \text{when \ $k$ \ is \ even},\\
(2\theta-1)L_{k+1}-L_k,\ \ \ \ \ \text{when \ $k$ \ is \ odd}.
\end{cases}
\end{equation}

For superconvergence analysis, instead of using the global projection $P_\theta u$, a much simpler local projection $P_hu$ is introduced \cite{Cao6}:
\begin{align*}
&\int_{I_j}(P_hu-u)v=0,\ \ \ \forall v\in P^{k-1}(I_j), \\
&\theta P_hu(x^-_{j+\frac{1}{2}})+(1-\theta)P_hu(x^+_{j-\frac{1}{2}})=\theta u(x^-_{j+\frac{1}{2}})+(1-\theta)u(x^+_{j-\frac{1}{2}}).
\end{align*}

\begin{lemma}\cite{Cao6}
Suppose $u\in W^{k+2,\infty}(\Omega)$ and $P_hu$ is the local projection of $u$ defined above with $\theta\neq\frac{1}{2}$, then
\begin{align*}
&|(u-P_hu)(R^r_{j,m})|\lesssim h^{k+2}\|u\|_{k+2,\infty},\ \ \\
&|\partial_x(u-P_hu)(R^{r*}_{j,m})|\lesssim h^{k+1}\|u\|_{k+2,\infty},\\
&\|P_hu-P_\theta u\|_{\infty}\lesssim h^{k+2}\|u\|_{k+2,\infty}.
\end{align*}
Here $R^r_{j,m}, R^{r*}_{j,m}$ are the roots of rescaled Radau polynomials $R^*_{j,m+1}$ and $\partial_xR^*_{j,m+1}$.
\end{lemma}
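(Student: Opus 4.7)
The plan is to work element-by-element using the Legendre expansion of $u$ on each $I_j$, and to exploit the defining conditions of $P_h u$ to pin down the structure of the projection error. Writing $u = \sum_{m=0}^{\infty} u_{j,m} L_{j,m}$ with the smooth-data decay $|u_{j,m}| \lesssim h^m\|u\|_{m,\infty}$, the orthogonality requirement $(P_h u - u, v)_j = 0$ for all $v \in P^{k-1}(I_j)$ forces $P_h u$ to coincide with the $L^2$-truncation of $u$ up to a single free coefficient $c_j$ in front of $L_{j,k}$. Hence
\begin{equation*}
u - P_h u = (u_{j,k} - c_j) L_{j,k} + \sum_{m \geq k+1} u_{j,m} L_{j,m}.
\end{equation*}
I would substitute this into the local side condition $\theta(u-P_h u)(x_{j+1/2}^-) + (1-\theta)(u-P_h u)(x_{j-1/2}^+) = 0$, using $L_{j,m}(x_{j+1/2}^-)=1$ and $L_{j,m}(x_{j-1/2}^+)=(-1)^m$, and solve for $c_j$. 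Since $\theta + (1-\theta)(-1)^k$ is nonzero for $\theta\neq\frac{1}{2}$, a short calculation gives $u_{j,k} - c_j = -(2\theta-1)u_{j,k+1} + O(h^{k+2})$ when $k$ is even and $u_{j,k} - c_j = -u_{j,k+1}/(2\theta-1) + O(h^{k+2})$ when $k$ is odd.

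Comparing these two expressions with the definition \eqref{0920} of the generalized Radau polynomial, the error rearranges to
\begin{equation*}
u - P_h u = \alpha_j R^*_{j,k+1}(x) + r_j(x), \quad |\alpha_j| \lesssim h^{k+1}\|u\|_{k+1,\infty}, \quad \|r_j\|_{\infty, I_j} \lesssim h^{k+2}\|u\|_{k+2,\infty},
\end{equation*}
where $r_j$ absorbs the higher Legendre tail $\sum_{m\geq k+2} u_{j,m} L_{j,m}$ together with the $O(h^{k+2})$ correction to $c_j$. The first two inequalities of the lemma then follow directly: at a root $R^r_{j,m}$ of $R^*_{j,k+1}$ the dominant term vanishes and only $r_j$ survives, giving $O(h^{k+2})$; differentiating the same identity costs one factor of $h$ in the $L^\infty$ norm, and at a root of $\partial_x R^*_{j,k+1}$ the dominant derivative term vanishes, leaving $O(h^{k+1})$.

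For the comparison $\|P_h u - P_\theta u\|_\infty$, I would use that both projections share the orthogonality to $P^{k-1}(I_j)$, so on each cell $P_h u - P_\theta u = \beta_j L_{j,k}$ and $\|P_h u - P_\theta u\|_{\infty, I_j} \leq |\beta_j|$. Repeating the Legendre analysis for $P_\theta u$ leads to a circulant system for the coefficient of $L_{j,k}$, which by Lemma \ref{27} admits a unique solution. Using smoothness of $u$, which ensures $u_{j+1,k+1} - u_{j,k+1} = O(h^{k+2})$, the circulant system yields $u_{j,k} - \hat c_j = -(2\theta-1)u_{j,k+1} + O(h^{k+2})$ (respectively $-u_{j,k+1}/(2\theta-1)$), exactly matching the local formula above. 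Subtracting, the $u_{j,k+1}$-contributions cancel and only the $O(h^{k+2})$ remainder survives, establishing the third estimate.

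The most delicate step is the cancellation argument for the third inequality: the boundary condition of $P_\theta u$ couples neighboring cells through a circulant system while the condition of $P_h u$ is strictly local to $I_j$, and I must verify carefully that smoothness of the data together with Lemma \ref{27} forces the same leading-order coefficient in both settings, so that the subtraction really leaves only the $O(h^{k+2})$ remainder rather than the naive $O(h^{k+1})$ size of each coefficient individually.
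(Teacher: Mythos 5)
Your argument is correct; the paper itself gives no proof of this lemma (it is quoted from \cite{Cao6}), and your route --- Legendre expansion plus the local side condition to identify the leading error term as a multiple of the generalized Radau polynomial $R^*_{j,k+1}$, then the circulant system with the smoothness bound $u_{j+1,k+1}-u_{j,k+1}=O(h^{k+2})$ and Lemma \ref{27} for the comparison with $P_\theta u$ --- is essentially the standard proof given in that reference. The only point to state explicitly is that the derivative bound on the non-polynomial tail $\sum_{m\ge k+2}u_{j,m}L_{j,m}$ comes from the approximation estimate $\|\partial_x(u-\Pi_{k+1}u)\|_\infty\lesssim h^{k+1}\|u\|_{k+2,\infty}$ rather than an inverse inequality, which applies only to the polynomial part of $r_j$.
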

We are now ready to show the superconvergence result at generalized Radau points.
\begin{theorem}\label{radau}
Let $u\in W^{k+5,\infty}(\Omega)$ and $u_h$ be the numerical solution of \eqref{1}, suppose $u^\ell_I, \ell\geq 2$ is the special interpolation function defined in (\ref{9}), then for the periodic boundary conditions
\begin{align*}
& ~ \|e_{u,r}\|\lesssim (1+t)h^{k+2}\|u\|_{k+5,\infty},\ \ \ \ \ \ \ \ \ \ \ \ \ \|e_{u,rx}\|\lesssim (1+t)h^{k+1}\|u\|_{k+5,\infty},\\
& \textcolor{red}{\Big(\int^t_0\|e_{q,l}\|^2d\tau\Big)^\frac{1}{2} \lesssim (1+t)h^{k+2}\|u\|_{k+5,\infty},\ \Big(\int^t_0\|e_{q,lx}\|^2d\tau\Big)^\frac{1}{2}
     \lesssim (1+t)h^{k+1}\|u\|_{k+5,\infty},}
\end{align*}
where
\begin{align*}
& \|e_{u,r}\|=\max\limits_{j\in\mathbb{Z}_N}|(u-u_h)(R^r_{j,m})|, \ \ \ \  \|e_{u,rx}\|=\max\limits_{j\in\mathbb{Z}_N}|(u-u_h)_x(R^{r*}_{j,m})|,\\
& \textcolor{red}{\|e_{q,l}\|=\max\limits_{j\in\mathbb{Z}_N}|(q-q_h)(R^l_{j,m})|, \ \ \ \ \ \|e_{q,lx}\|=\max\limits_{j\in\mathbb{Z}_N}|(q-q_h)_x(R^{l*}_{j,m})|.}
\end{align*}
\textcolor{red}{Here, $R^l_{j,m}$ and $R^{l*}_{j,m}$ are the roots of rescaled Radau polynomials $R^*_{j,m+1}$ and $\partial_xR^*_{j,m+1}$ in \eqref{0920} with $\theta$ replaced by $\tilde \theta$.}
\end{theorem}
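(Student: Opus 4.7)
The plan is to decompose the pointwise error $e_u = u - u_h$ at the generalized Radau point $R^r_{j,m}$ by inserting the local projection $P_h u$ and the interpolation function $u^\ell_I = P_\theta u - W^\ell_u$:
$$
e_u = (u - P_h u) + (P_h u - P_\theta u) - W^\ell_u + (u^\ell_I - u_h).
$$
The first term is of size $h^{k+2}$ at Radau points by the lemma immediately preceding Theorem \ref{radau}, and $P_h u - P_\theta u$ is uniformly $O(h^{k+2})$ by the same lemma. For the third piece I need an $L^\infty$ bound $\|W^\ell_u\|_\infty \lesssim h^{k+2}\|u\|_{k+3,\infty}$, obtained by re-running the proof of Lemma \ref{w}: the Legendre coefficients $\beta^j_{i,m}$ in the expression (\ref{17}) satisfy $|\beta^j_{i,m}| \lesssim h^{k+i+1}$ via the same circulant-system/Lemma \ref{27} argument that is written out for $\partial_t \beta^j_{i,m}$. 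Finally, Theorem \ref{superclose} combined with the inverse inequality yields $\|u^\ell_I - u_h\|_\infty \lesssim h^{-1/2}\|u^\ell_I - u_h\| \lesssim (1+t) h^{k+\ell+1/2}$; for $\ell \geq 2$ this is $\lesssim (1+t) h^{k+5/2}$ and is absorbed by the $h^{k+2}$ terms. Collecting the four estimates gives $\|e_{u,r}\| \lesssim (1+t) h^{k+2}\|u\|_{k+5,\infty}$.

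For the derivative error at $R^{r*}_{j,m}$, the projection lemma directly gives the $h^{k+1}$ bound on $\partial_x(u - P_h u)(R^{r*}_{j,m})$. The remaining three pieces are piecewise polynomials, so I apply the inverse inequality $\|\partial_x v\|_\infty \lesssim h^{-1}\|v\|_\infty$, promoting each $h^{k+2}$ bound to $h^{k+1}$ and promoting the supercloseness contribution to $(1+t) h^{k+\ell-1/2}$; for $\ell \geq 2$ this is $\lesssim (1+t) h^{k+3/2}$, still dominated by $h^{k+1}$. Summing gives the claimed $\|e_{u,rx}\| \lesssim (1+t) h^{k+1}\|u\|_{k+5,\infty}$.

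For the auxiliary variable, I introduce the analogous local projection $\widetilde{P}_h$ obtained by swapping $\theta \leftrightarrow \tilde\theta$ in the definition of $P_h$, so that the lemma before Theorem \ref{radau} transfers to the mirrored Radau roots $R^l_{j,m}, R^{l*}_{j,m}$. The decomposition
$$
e_q = (q - \widetilde{P}_h q) + (\widetilde{P}_h q - P_{\tilde\theta} q) - W^\ell_q + (q^\ell_I - q_h)
$$
treats the first three pieces pointwise in both $x$ and $t$, giving $O(h^{k+2})$ at $R^l_{j,m}$ and $O(h^{k+1})$ at $R^{l*}_{j,m}$, contributing at most $\sqrt{t}\,h^{k+2}$ (resp.\ $\sqrt t\,h^{k+1}$) after time integration. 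The new feature is that Theorem \ref{superclose} only furnishes an $L^2(0,t;L^2)$ bound on $q^\ell_I - q_h$, so the inverse inequality is applied inside the time integral:
$$
\Big(\int_0^t \|q^\ell_I - q_h\|_\infty^2\, d\tau\Big)^{1/2} \lesssim h^{-1/2}\Big(\int_0^t \|q^\ell_I - q_h\|^2 \,d\tau\Big)^{1/2} \lesssim (1+t) h^{k+\ell+1/2},
$$
which is absorbed by the $h^{k+2}$ contributions when $\ell \geq 2$; the derivative case costs an additional $h^{-1}$, yielding $(1+t) h^{k+\ell-1/2}$, still absorbed by $h^{k+1}$.

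The main obstacle I anticipate is verifying the static $L^\infty$ bound $\|W^\ell_u\|_\infty \lesssim h^{k+2}$ (and likewise for $W^\ell_q$), since Lemma \ref{w} only records $\|\partial_t w_{u,i}\|_\infty$ and $\|w_{q,i}\|_\infty$. This amounts to re-running the inductive circulant-matrix argument in the proof of Lemma \ref{w} with $\partial_t$ removed from the $w_{u,i}$ tower; the structure is identical and Lemma \ref{27} applies verbatim, but the bookkeeping must be carried out once. A secondary, lighter concern is checking that the mirrored local projection $\widetilde{P}_h$ enjoys the same pointwise superconvergence as $P_h$ at the roots of the mirrored Radau polynomial, which follows by the symmetry $\theta \leftrightarrow \tilde\theta$ in the argument of the lemma preceding Theorem \ref{radau}.
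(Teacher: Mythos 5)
Your proposal is correct and follows essentially the same route as the paper: the identical four-term decomposition $e_u=(u-P_hu)+(P_hu-P_\theta u)-W^\ell_u+(u^\ell_I-u_h)$ with $\ell=2$, the local-projection lemma for the first two pieces, and Theorem \ref{superclose} plus the inverse inequality for the last. The static bound $\|W^2_u\|_\infty\lesssim h^{k+2}$ that you flag as the main bookkeeping obstacle is also used (without comment) in the paper's proof, and it does follow from the circulant-system argument in Lemma \ref{w} exactly as you describe, since that proof already establishes $|\beta^j_{1,k}|\lesssim h^{k+2}\|u\|_{k+2,\infty}$ before differentiating in time; your treatment of the derivative and of the auxiliary variable fills in details the paper dispatches with ``the same arguments.''
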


\begin{proof}
By choosing $\ell=2$ in Theorem \ref{superclose}, we obtain
\begin{align*}
\|u_h-u^2_I\|\lesssim (1+t)h^{k+3}\|u\|_{k+5,\infty}.
\end{align*}
From the inverse inequality, we can get
\begin{align*}
\|u_h-u^2_I\|_{\infty}\lesssim h^{-\frac{1}{2}}\|u_h-u^2_I\|\lesssim (1+t)h^{k+\frac{5}{2}}\|u\|_{k+5,\infty}.
\end{align*}
By the the triangle inequality,
\begin{align*}
|(u-u_h)(R^r_{j,m})|
&\lesssim\|u_h-u^2_I\|_{\infty}+|(u-P_hu)(R^r_{j,m})|+\|P_hu-P_\theta u\|_{\infty}+\|W^2_u\|_\infty\\
&\lesssim h^{k+2}\|u\|_{k+2,\infty}.
\end{align*}
\textcolor{red}{The superconvergence results for the derivative of errors and the auxiliary variable $q$ can be obtained by the same arguments. This completes the proof of Theorem \ref{radau}.}
\end{proof}

\textcolor{red}
{\begin{remark}
The analysis of superconvergence is mainly based on the supercloseness between the LDG solution $(u_h,q_h)$ and the interpolation function $(u^\ell_I,q^\ell_I)$ by asking for $(W^\ell_u,W^\ell_q)$ satisfying
$$
\big(W^\ell_u\big)^{(\theta)}_{j+\frac{1}{2}}=0, \ \ \ \
\big(W^\ell_q\big)^{(\tilde{\theta})}_{j+\frac{1}{2}}=0,\ \ \ j\in\mathbb{Z}_N.
$$
Therefore, when $\lambda \neq \theta$, the superconvergence results for auxiliary variable $q$ are no longer valid, since \eqref{31} is needed  indicating that $\big(W^\ell_q\big)^{(\tilde{\theta})}_{j+\frac{1}{2}}\ne 0$. In addition,  when $\lambda = \theta$, superconvergence of $q$ can be proved in the $L^2([0, T]; L^2[0, 2 \pi])$ norm while superconvergence can be observed numerically in the $L^\infty([0, T]; L^2[0, 2 \pi])$ norm.
\end{remark}
}
\section{\textcolor{green}{Other boundary conditions}}\label{extension}
\subsection{\textcolor{green}{Mixed boundary conditions}}
For the mixed boundary conditions
\begin{align}\label{19}
u(0,t)=g_1(t), \ \ \ \ \ u_x(2\pi,t)=g_2(t),
\end{align}
the numerical fluxes are chosen as
\begin{equation}\label{21}
(\tilde{u}_h-\hat{q}_h,\hat{u}_h)_{j+\frac{1}{2}}=\begin{cases}
(g_1-q_h^+,g_1),\ \ \ j=0,\\
(u^{\theta}_h-q^{\tilde{\theta}}_h, u^{\theta}_h),\ \ \ j=1,\cdots,N-1,\\
(u^-_h-g_2,u^-_h),\ \  j=N.
\end{cases}
\end{equation}

The corresponding global projections $P_\theta$ and  $P_{\tilde \theta}$ are modified to be in the following piecewise global version, i.e.,
\begin{equation*}
\begin{cases}
(\tilde{P}_\theta u,v)_j=(u,v)_j,\ \forall v\in P^{k-1}(I_j),\\
(\tilde{P}_\theta u)^{(\theta)}_{j+\frac{1}{2}}=u^{(\theta)}_{j+\frac{1}{2}},~~ j=1,\cdots,N-1,\\
(\tilde{P}_\theta u)^-_{N+\frac{1}{2}}=u^-_{{N+\frac{1}{2}}},~ j=N,
\end{cases}
\end{equation*}
and
\begin{equation}\label{22}
\begin{cases}
(\tilde{P}_{\tilde{\theta}} q,\eta)_j=(q,\eta)_j,\ \forall \eta\in P^{k-1}(I_j),\\
(\tilde{P}_{\tilde{\theta}} q)^{(\tilde{\theta})}_{j-\frac{1}{2}}=q^{(\tilde{\theta})}_{j-\frac{1}{2}},~~~ j=2,\cdots,N,\\
(\tilde{P}_{\tilde{\theta}} q)^+_{\frac{1}{2}}=q^+_{\frac{1}{2}}, ~~~~~~~~\!j=1.
\end{cases}
\end{equation}
Obviously, the projection $\tilde{P}_\theta$ can be decoupled staring from the cell $I_N$ and $\tilde{P}_{\tilde{\theta}}$ can be computed from the cell $I_1$.
Moreover, we have the following optimal approximation properties.
\begin{lemma}\cite{Meng3}
Assume $z\in W^{k+1,\infty}(I_j)$ with $\theta\neq\frac{1}{2}$, the projection $P=\tilde{P}_\theta$ or $\tilde{P}_{\tilde{\theta}}$ defined above satisfies the following approximation property
\begin{align*}
\|z-P z\|_{I_j}+h^{\frac{1}{2}}\|z-P z\|_{\infty,I_j}\leq Ch^{k+\frac{3}{2}}\|z\|_{k+1,\infty},
\end{align*}
where $C$ is independent of $h$ and $z$.
\end{lemma}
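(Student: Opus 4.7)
The plan is to combine element-wise Bramble--Hilbert approximation with an inductive propagation across cells, exploiting the fact that the projections $\tilde{P}_\theta$ and $\tilde{P}_{\tilde{\theta}}$ are \emph{decoupled}: each is computed cell by cell starting from the cell in which the boundary prescription provides a standard one-sided collocation---namely $(\tilde{P}_\theta z)^-_{N+1/2}=z^-_{N+1/2}$ on $I_N$, and $(\tilde{P}_{\tilde\theta} z)^+_{1/2}=z^+_{1/2}$ on $I_1$. In this initial cell the projection coincides with a standard local Gauss--Radau projection and already obeys the desired bound by a textbook scaling argument on the reference cell $\hat I=[-1,1]$. For each subsequent cell the hypothesis $\theta\neq \tfrac12$ is used to guarantee invertibility of the $(k+1)\times(k+1)$ local linear system.

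On an interior cell $I_j$ I would decompose $\tilde{P}_\theta z = \tilde{P}^{\mathrm{loc}} z + c_j L_{j,k}$, where $\tilde{P}^{\mathrm{loc}} z \in P^k(I_j)$ is the purely local right-biased Gauss--Radau projection of $z$ (satisfying $(\tilde{P}^{\mathrm{loc}} z)^-_{j+1/2}=z^-_{j+1/2}$ together with the same $k$ orthogonality conditions) and $L_{j,k}$ is the rescaled Legendre polynomial of degree $k$. The Bramble--Hilbert bound $\|z-\tilde{P}^{\mathrm{loc}} z\|_{I_j}+h^{1/2}\|z-\tilde{P}^{\mathrm{loc}} z\|_{\infty,I_j}\lesssim h^{k+3/2}\|z\|_{k+1,\infty}$ is standard. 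Imposing instead the true generalized collocation $(\tilde P_\theta z)^{(\theta)}_{j+1/2}=z^{(\theta)}_{j+1/2}$ produces the closed-form identity $c_j = (\tilde\theta/\theta)\,(z-\tilde P_\theta z)^+_{j+1/2}$, and, together with $L_{j,k}(x^+_{j-1/2})=(-1)^k$, a scalar linear recurrence
\[
f_{j-1} = \tilde\alpha_{j-1} + (-1)^{k+1}\,\frac{\tilde\theta}{\theta}\, f_j,\qquad f_{N-1}=\tilde\alpha_{N-1},
\]
for the boundary errors $f_j := (z-\tilde P_\theta z)^+_{j+1/2}$, in which each forcing term $\tilde\alpha_i$ is of size $O(h^{k+1}\|z\|_{k+1,\infty})$ by the local Bramble--Hilbert estimate applied to $\tilde P^{\mathrm{loc}} z$.

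The main obstacle is establishing a bound $|f_j|\lesssim h^{k+1}\|z\|_{k+1,\infty}$ uniform in $j$ and in $N$; once this is in hand, $|c_j|\lesssim h^{k+1}$ combined with $\|L_{j,k}\|_{I_j}\sim h^{1/2}$ and $\|L_{j,k}\|_{\infty,I_j}=1$ yields $\|c_j L_{j,k}\|_{I_j}\lesssim h^{k+3/2}\|z\|_{k+1,\infty}$ and $\|c_j L_{j,k}\|_{\infty,I_j}\lesssim h^{k+1}\|z\|_{k+1,\infty}$, and the triangle inequality applied to the decomposition closes the argument. For $\theta>1/2$ the multiplier $|\tilde\theta/\theta|<1$ makes the forward recurrence contractive and the uniform bound is immediate; the remaining range $\theta<1/2$, $\theta\neq 0$ is handled by recasting the recurrence in matrix form and invoking a bound in the spirit of Lemma~\ref{27}, the hypothesis $\theta\neq 1/2$ being precisely what rules out the degenerate multiplier $|\tilde\theta/\theta|=1$. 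The analysis of $\tilde P_{\tilde\theta}$ is entirely parallel, with the recurrence propagated from $I_1$ rightward and the roles of the left and right endpoints interchanged.
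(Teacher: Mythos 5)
The paper does not actually prove this lemma---it is quoted from \cite{Meng3}---so there is no in-paper argument to compare against; I can only assess your proposal on its own terms. Your overall strategy is the standard (and correct) one: anchor at the cell carrying the one-sided collocation, decompose $\tilde P_\theta z = \tilde P^{\mathrm{loc}}z + c_jL_{j,k}$ on each subsequent cell, and track the interface errors through the scalar recurrence $f_{j-1}=\tilde\alpha_{j-1}+(-1)^{k+1}(\tilde\theta/\theta)f_j$. The identities you derive ($c_j=(\tilde\theta/\theta)(z-\tilde P_\theta z)^+_{j+1/2}$, $L_{j,k}(x^+_{j-1/2})=(-1)^k$, $f_{N-1}=\tilde\alpha_{N-1}$) are right, and in the contractive regime $|\tilde\theta/\theta|<1$, i.e.\ $\theta>\tfrac12$, the geometric summation gives the uniform bound $|f_j|\lesssim h^{k+1}\|z\|_{k+1,\infty}$ and the triangle-inequality assembly closes the proof. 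This covers every value of $\theta$ the paper actually uses (stability already forces $\lambda\ge\tfrac12$, and all experiments take $\theta\in\{0.7,0.8,0.9,1.1,1.2\}$).

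The genuine gap is your treatment of $0\neq\theta<\tfrac12$. Lemma~\ref{27} cannot be invoked ``in spirit'' here: its uniform bound rests on the circulant (periodic wrap-around) structure of $A$, whereas your system is triangular bidiagonal, so its solution \emph{is} the iterated recurrence, and when $|\tilde\theta/\theta|>1$ the partial sums grow like $|\tilde\theta/\theta|^{N}$, which is unbounded as $h\to 0$. This is not a repairable technicality: already for $k=0$ and $z(x)=x$ on a uniform mesh one computes $\epsilon_{N-m}=-h\,r\,\bigl(1-(-r)^m\bigr)/(1+r)$ with $r=\tilde\theta/\theta$, so for $r>1$ the error at the far cell is of size $h\,r^{N}\to\infty$. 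In other words, the estimate is simply false for these decoupled projections when $\theta<\tfrac12$; the hypothesis ``$\theta\neq\tfrac12$'' in the statement should be read as $\theta>\tfrac12$ (equivalently $|1-\theta|<|\theta|$), the condition under which the periodic-case lemma is also stated. You should either restrict your claim accordingly or flag the mismatch with the stated hypothesis, rather than assert that the remaining range can be handled.
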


Replacing $P_\theta$ ($P_{\tilde \theta}$) by $\tilde{P}_\theta$ ($\tilde{P}_{\tilde{\theta}}$), we are able to construct the following correction functions in possession of supercloseness properties. That is,
for $z \in P^{k-1}(I_j)$
\begin{align*}
&(w_{u,i}-\bar{h}_jD_x^{-1}w_{q,i-1},z)_j=0,\ \ \ \ \ \ \ \ \ \ \ \ \ \ ~ \big(w^{(\theta)}_{u,i}\big)_{{j+\frac{1}{2}}}=0,\ \forall j\in \mathbb{Z}_{N-1},\\
&(w_{q,i}-w_{u,i}-\bar{h}_jD_x^{-1}\partial_tw_{u,i-1},z)_j=0,\ \ \ \big(w^{(\tilde{\theta})}_{q,i}\big)_{{j+\frac{1}{2}}}=0,\ \forall j\in \mathbb{Z}_{N-1},\\
&\qquad \qquad \qquad \qquad \qquad \qquad \qquad\qquad \qquad \big(w^-_{u,i}\big)_{{N+\frac{1}{2}}}=0,\ \ \big(w^+_{q,i}\big)_{\frac{1}{2}}=0.
\end{align*}
The superconvergent results can thus be obtained if we follow the same argument as that in section \ref{correction} and section \ref{super}.

\subsection{Dirichlet boundary conditions}
For Dirichlet boundary conditions
\begin{align}\label{20}
u(0,t)=g_3(t),\ \ \ \ \ \ u(2\pi,t)=g_4(t),
\end{align}
following \cite{Castillo}, we choose the numerical fluxes as
\begin{equation*}
(\tilde{u}_h-\hat{q}_h,\hat{u}_h)_{j+\frac{1}{2}}=\begin{cases}
(g_3-q_h^+,g_3),\ \ \ \ \ \ j=0,\\
(u^{\theta}_h-q^{\tilde{\theta}}_h, u^{\theta}_h),\ \ ~\ \ \ j=1,\cdots,N-1,\\
(u^-_h-q_h^-,g_4),\ \ \ \ \ j=N.
\end{cases}
\end{equation*}

Similarly, we still need to make slight changes to the projection. For projection $\tilde{P}_{\tilde{\theta}}$, we still adopt the definition in (\ref{22}), while the projection $\tilde{P}_\theta$ is modified as follows:
\begin{equation*}
\begin{cases}
(\tilde{P}_\theta u,v)_j=(u,v)_j,\ \forall v\in P^{k-1}(I_j),\\
(\tilde{P}_\theta u)^{(\theta)}_{j+\frac{1}{2}}=u^{(\theta)}_{j+\frac{1}{2}},\ \ j\in \mathbb{Z}_{N-1},\\
\ (\tilde{P}_\theta u)^-_{N+\frac{1}{2}}=u^-_{N+\frac{1}{2}}+(\tilde{P}_{\tilde{\theta}}q-q)^-_{N+\frac{1}{2}}.
\end{cases}
\end{equation*}
From the last equation we can see that, compared with the mixed boundary condition, the left limit of projection at point $x_{N+\frac{1}{2}}$ consists of two parts. One is the left limit of exact solution $u$ at point $x_{N+\frac{1}{2}}$, the other is the left limit of projection error of the auxiliary variable $q$ at point $x_{N+\frac{1}{2}}$. Since we do not have any information about the auxiliary variable $q$ at the boundary, we need to use the prime variable $u$ to eliminate the boundary term introduced by $\tilde{P}_{\tilde{\theta}}q-q$ at point $x_{N+\frac{1}{2}}$.

The superconvergent results can be obtained if we define the following correction functions:
for $z \in P^{k-1}(I_j)$,
\begin{align*}
&(w_{u,i}-\bar{h}_jD_x^{-1}w_{q,i-1},z)_j=0,\ \ \ \ \ \ \ \ \ \ \ \ \ \ \big(w^{(\theta)}_{u,i}\big)_{{j+\frac{1}{2}}}=0,\ \forall j\in \mathbb{Z}_{N-1},\\
&(w_{q,i}-w_{u,i}-\bar{h}_jD_x^{-1}\partial_tw_{u,i-1},z)_j=0,\ \ \ \big(w^{(\tilde{\theta})}_{q,i}\big)_{{j+\frac{1}{2}}}=0,\ \forall j\in \mathbb{Z}_{N-1},\\
&\qquad \qquad \qquad\qquad\qquad \qquad \qquad\qquad \qquad \big(w^+_{q,i}\big)_{\frac{1}{2}}=0,\ \big(w^-_{u,i}\big)_{{N+\frac{1}{2}}}=\big(w^-_{q,i}\big)_{{N+\frac{1}{2}}}.
\end{align*}

\subsection{Initial discretization}
In this section, we consider how to discretize the initial datum. Initial value discretization is very important for the study of superconvergence, which can be obtained using the same technique as that in \cite{Cao6}. Specifically, for periodic boundary conditions,
\begin{enumerate}
  \item according to the definition of projection $P_\theta, P_{\tilde{\theta}}$, calculate the $w_{u,0}, w_{q,0}$;
  \item calculate $w_{u,i}, w_{q,i}$ by the equations (\ref{28});
  \item calculate $W^\ell_u=\sum\limits^\ell_{i=1}w_{u,i}$, $u^\ell_I=P_\theta u-W^\ell_u$;
  \item let $u_h(\cdot,0)=u^\ell_I(\cdot,0)$.
\end{enumerate}

\section{Numerical results}\label{numerical}
In this section, we provide numerical examples to illustrate our theoretical findings. For time discretization, we use TVDRK3 method and take $\Delta t=CFL*h^2$.
\begin{example}
We consider the following problem
\begin{align*}
&u_t+u_x-u_{xx}=0,\ \ \ \ \ \ \ (x,t)\in [0,2\pi]\times (0,T],\\
&u(x,0)=\sin(x)-x,\ \ \ \ \ x\in R
\end{align*}
with the periodic boundary condition, where the exact solution is
\begin{align*}
u(x,t)=e^{-t}\sin(x-t).
\end{align*}
\end{example}

\begin{table}[!htp]
\caption{\label{samew} Errors and orders for $u$. $\lambda=\theta, T=1.0$, $k=2,3,4$. }
\begin{center}
\small
\scalebox{1.0}{\begin{tabular} {c| c c c c c c c c c} \hline\hline

                 &$N$    &$\|e_{un}\|$  &Order &$\|e_u\|_c$ &Order &$\|e_{u,r}\|$  &Order &$\|e_{u,rx}\|$  &Order \\ \hline
$k=2$            &20   &5.20E-08  &--    &1.91E-07    &--    &4.53E-06  &--    &6.95E-05   &-- \\
$CFL=0.01$       &40   &1.83E-09  &4.83  &6.23E-09    &4.94  &2.80E-07  &4.01  &8.74E-06   &2.99\\
$\lambda=0.8$    &80   &6.09E-11  &4.91  &1.99E-10    &4.96  &1.74E-08  &4.01  &1.10E-06   &2.99\\
$\theta=0.8$     &160  &1.96E-12  &4.96  &6.32E-12    &4.98  &1.08E-09  &4.00  &1.38E-07   &2.99\\\hline

$k=3$            &15   &5.35E-10  &--    &6.62E-10    &--    &1.90E-07  &--    &1.27E-05   &-- \\
$CFL=0.005$      &30   &3.82E-12  &7.13  &5.69E-12    &6.86  &5.53E-09  &5.10  &7.86E-07   &4.02\\
$\lambda=0.9$    &45   &2.07E-13  &7.19  &3.50E-13    &6.88  &7.12E-10  &5.05  &1.55E-07   &4.00\\
$\theta=0.9$     &60   &2.66E-14  &7.13  &4.80E-14    &6.91  &1.67E-10  &5.01  &4.89E-08   &4.01\\\hline

$k=4$            &10   &1.60E-11  &--    &5.08E-11    &--    &8.34E-08  &--    &7.88E-06   &--  \\
$CFL=0.001$      &15   &2.04E-13  &10.76 &1.23E-12    &9.17  &7.35E-09  &5.98  &1.06E-06   &4.94\\
$\lambda=1.2$    &20   &1.07E-14  &10.23 &7.91E-14    &9.54  &1.31E-09  &6.01  &2.54E-07   &4.98\\
$\theta=1.2$     &25   &6.74E-15  &2.10  &9.02E-15    &9.73  &3.41E-10  &6.01  &8.33E-08   &4.99\\\hline\hline

\end{tabular}}
\end{center}
\end{table}

\begin{table}[!htp]
\caption{\label{samewqq} Errors and orders for $q$. $\lambda=\theta, T=1.0$, $k=2,3,4$. }
\begin{center}
\small
\scalebox{1.0}{\begin{tabular} {c| c c c c c c c c c} \hline\hline

                 &$N$    &$\|e_{qn}\|$  &Order &$\|e_q\|_c$ &Order &$\|e_{q,l}\|$  &Order &$\|e_{q,lx}\|$  &Order \\ \hline
$k=2$            &20   &1.28E-07  &--    &2.50E-08    &--    &5.10E-06  &--    &8.50E-05   &-- \\
$CFL=0.01$       &40   &4.15E-09  &4.94  &1.04E-09    &4.59  &3.19E-07  &3.99  &1.06E-05   &3.00\\
$\lambda=0.7$    &80   &1.33E-10  &4.96  &3.76E-11    &4.79  &2.00E-08  &4.00  &1.33E-06   &2.99\\
$\theta=0.7$     &160  &4.21E-12  &4.98  &1.26E-12    &4.90  &1.25E-09  &4.00  &1.67E-07   &3.00\\\hline

$k=3$            &15   &1.55E-09  &--    &5.31E-10    &--    &4.14E-07  &--    &1.44E-05   &-- \\
$CFL=0.005$      &30   &1.26E-11  &6.94  &3.81E-12    &7.12  &1.25E-08  &5.05  &8.92E-07   &4.01\\
$\lambda=0.9$    &45   &7.52E-13  &6.96  &2.07E-13    &7.19  &1.63E-09  &5.02  &1.76E-07   &4.01\\
$\theta=0.9$     &60   &1.02E-13  &6.93  &2.66E-14    &7.12  &3.83E-10  &5.03  &5.54E-08   &4.01\\\hline

$k=4$            &10   &5.72E-11  &--    &1.58E-11    &--    &1.02E-07  &--    &8.12E-06   &--  \\
$CFL=0.001$      &15   &1.60E-12  &8.82  &2.03E-13    &10.74 &8.79E-09  &6.05  &1.11E-06   &4.91\\
$\lambda=1.2$    &20   &1.32E-13  &8.68  &1.08E-14    &10.21 &1.55E-09  &6.03  &2.63E-07   &5.00\\
$\theta=1.2$     &25   &2.00E-14  &8.45  &6.80E-15    &2.06  &4.04E-10  &6.03  &8.70E-08   &4.95\\\hline\hline

\end{tabular}}
\end{center}
\end{table}

Table \ref{samew} lists the results for $u$ with $\lambda=\theta$, from which we observe $(2k+1)$th order superconvergence for numerical traces as well as cell averages, and  that the convergence order of the error as well as its derivative are $k+2$ and $k+1$, respectively. \textcolor{red}{Table \ref{samewqq} shows errors and orders for $q$, demonstrating that our results hold true for the auxiliary variable when $\lambda = \theta$.} Moreover, the results with different weights for $\lambda ,\theta$ are given in Table \ref{diffw}, and similar conclusions can be observed for $u$, indicating that choosing different parameters for convection term and diffusion term does not affect the superconvergence results as far as the prime variable $u$ is concerned.

\begin{table}[!htp]
\caption{\label{diffw} Errors and orders for $u$. $\lambda \neq \theta, T=1.0$, $k=2,3,4$. }
\begin{center}
\small
\scalebox{1.0}{\begin{tabular} {c| c c c c c c c c c} \hline\hline

                 &$N$    &$\|e_{un}\|$  &Order &$\|e_u\|_c$ &Order &$\|e_{u,r}\|$  &Order &$\|e_{u,rx}\|$  &Order \\ \hline
$k=2$            &20   &1.41E-07  &--    &3.09E-07    &--    &4.75E-06  &--    &6.71E-05   &-- \\
$CFL=0.01$       &40   &4.60E-09  &4.93  &9.89E-09    &4.97  &2.91E-07  &4.03  &8.58E-06   &2.97\\
$\lambda=1.2$    &80   &1.47E-10  &4.96  &3.13E-10    &4.98  &1.80E-08  &4.01  &1.09E-06   &2.97\\
$\theta=0.8$     &160  &4.66E-12  &4.98  &9.88E-12    &4.99  &1.12E-09  &4.01  &1.37E-07   &2.99\\\hline

$k=3$            &15   &1.85E-10  &--    &7.44E-10    &--    &2.59E-07  &--    &4.80E-06   &-- \\
$CFL=0.002$      &30   &1.60E-12  &6.85  &5.46E-12    &7.09  &8.03E-09  &5.01  &3.01E-07   &3.99\\
$\lambda=0.9$    &45   &9.64E-14  &6.93  &3.13E-13    &7.04  &1.05E-09  &5.01  &5.96E-08   &3.99\\
$\theta=1.1$     &60   &1.28E-14  &7.02  &4.16E-14    &7.02  &2.50E-10  &5.00  &1.88E-08   &4.00\\\hline

$k=4$            &10   &1.87E-10  &--    &1.69E-10    &--    &8.13E-08  &--    &7.88E-06   &--  \\
$CFL=0.001$      &15   &4.93E-12  &8.97  &4.63E-12    &8.88  &7.20E-09  &5.98  &1.06E-06   &4.95\\
$\lambda=0.8$    &20   &3.54E-13  &9.15  &3.35E-13    &9.12  &1.28E-09  &6.00  &2.53E-07   &4.98\\
$\theta=1.2$     &25   &4.33E-14  &9.41  &4.09E-14    &9.43  &3.38E-10  &5.97  &8.31E-08   &4.99\\\hline\hline

\end{tabular}}
\end{center}
\end{table}

\begin{table}[!htp]\centering
\caption{\label{tablenu} \textcolor{green}{Errors and rates for mixed boundary condition (\ref{19}).}}
\vspace{2.0ex}
{\small
\begin{tabular}[c]{cc|cccc|cccc}\hline\hline
&\multirow{2}*{$N$}
&\multicolumn{4}{c|}{$\lambda=\theta=0.8$}
&\multicolumn{4}{c}{$\lambda=\theta=1.2$}\\ \cline{3-10}
&& $\|e_{un}\|$  &Order  &$\|e_u\|_c$   &Order
&  $\|e_{un}\|$  &Order  &$\|e_u\|_c$   &Order\\ \hline

\multirow{4}*{$P^1$}
&  40  &2.30E-05  &--     &3.48E-05  &--     &8.25E-06  &--     &1.35E-05  &--   \\
&  80  &2.72E-06  &3.08  &4.30E-06  &3.02  &1.06E-06  &2.95  &1.75E-06  &2.94\\
& 160  &3.31E-07  &3.04  &5.35E-07  &3.01  &1.35E-07  &2.97  &2.24E-07  &2.97\\
& 320  &4.07E-08  &3.02  &6.67E-08  &3.00  &1.71E-08  &2.99  &2.82E-08  &2.98\\ \hline

\multirow{4}*{$P^2$}
& 20  &7.36E-08  &--     &1.83E-07  &--     &4.49E-07  &--     &7.09E-07  &--   \\
& 40  &2.05E-09  &5.16  &5.64E-09  &5.02  &1.37E-08  &5.04  &2.24E-08  &4.98\\
& 80  &6.10E-11  &5.07  &1.76E-10  &5.00  &4.16E-10  &5.04  &6.99E-10  &5.00\\
&160  &1.87E-12  &5.04  &5.10E-12  &5.10  &1.27E-11  &5.03  &2.15E-11  &5.02\\ \hline

\multirow{4}*{$P^3$}
& 20  &9.64E-11  &--     &1.56E-10  &--     &3.90E-11  &--     &8.65E-11  &--   \\
& 30  &5.53E-12  &7.05  &9.63E-12  &6.88  &2.26E-12  &7.02  &4.93E-12  &7.07\\
& 40  &7.04E-13  &7.16  &1.32E-12  &6.90  &3.12E-13  &6.88  &6.45E-13  &7.07\\
& 50  &1.65E-13  &6.50  &2.93E-13  &6.77  &9.82E-14  &5.17  &1.82E-13  &5.66\\ \hline\hline
\end{tabular}}
\end{table}

\begin{example}\label{exn}
We consider the following problem
\begin{align*}
&u_t+u_x-u_{xx}=0,\ \ \ \ \ \ \ (x,t)\in [0,2\pi]\times (0,T],\\
&u(x,0)=\sin(x)-x,\ \ \ \ \ \ \ \ \ \ x\in R
\end{align*}
with the mixed boundary conditions
$$u(0,t)=e^{-t}\sin(t)-t,\ \ \ \ \ u_x(2\pi,t)=e^{-t}\cos(t)+1;$$
the exact solution is
\begin{align*}
u(x,t)=e^{-t}\sin(x-t)+x-t.
\end{align*}
\end{example}

The problem is solved by the LDG scheme (\ref{2}) with $k=1,2,3$, respectively, and the numerical fluxes are chosen as (\ref{21}). We list various errors and corresponding convergence rates when $\lambda=\theta=0.8, \lambda=\theta=1.2$ in Table \ref{tablenu}. The superconvergence results of order $2k+1$ at numerical traces as well as cell averages demonstrate that the superconvergence also holds for mixed boundary conditions. In addition, to verify theoretical results for Dirichlet boundary conditions, we consider Example \ref{exn} with the following Dirichlet boundary conditions  $$u(0,t)=e^{-t}\sin(t)+t,\ \ \ \ \ u(2\pi,t)=e^{-t}\cos(t)-1.$$
The results are shown in Table \ref{di}, which confirms that the conclusion still holds for Dirichlet boundary conditions.

\begin{table}[!htp]\centering
\caption{\label{di} Errors and rates for Dirichlet boundary condition (\ref{20}).}
\vspace{2.0ex}
{\small
\begin{tabular}[c]{cc|cccc|cccc}\hline\hline
&\multirow{2}*{$N$}
&\multicolumn{4}{c|}{$\lambda=\theta=0.7$}
&\multicolumn{4}{c}{$\lambda=\theta=0.9$}\\ \cline{3-10}
&& $\|e_{un}\|$  &Order  &$\|e_u\|_c$   &Order
&  $\|e_{un}\|$  &Order  &$\|e_u\|_c$   &Order\\ \hline

\multirow{4}*{$P^1$}
&  40  &3.44E-05  &-     &5.27E-05  &-     &1.54E-05  &-     &2.51E-05  &-   \\
&  80  &4.15E-06  &3.05  &6.62E-06  &2.99  &1.95E-06  &2.98  &3.21E-06  &2.96\\
& 160  &5.09E-07  &3.03  &8.27E-07  &3.00  &2.46E-07  &2.99  &4.07E-07  &2.98\\
& 320  &6.28E-08  &3.02  &1.03E-07  &3.00  &3.09E-08  &2.99  &5.12E-08  &2.99\\ \hline

\multirow{4}*{$P^2$}
& 20  &2.03E-08  &-     &1.13E-07  &-     &7.07E-08  &-     &2.47E-07  &-   \\
& 40  &7.83E-10  &4.69  &3.69E-09  &4.93  &2.34E-09  &4.92  &7.89E-09  &4.97\\
& 80  &3.13E-11  &4.65  &1.20E-10  &4.94  &7.56E-11  &4.95  &2.50E-10  &4.98\\
&160  &1.06E-12  &4.88  &3.81E-12  &4.98  &2.38E-12  &4.99  &7.83E-12  &5.00\\ \hline

\multirow{4}*{$P^3$}
& 20  &1.26E-10  &-     &2.05E-10  &-     &6.23E-11  &-     &1.25E-10  &-   \\
& 30  &7.77E-12  &6.87  &1.17E-11  &7.06  &3.78E-12  &6.91  &6.87E-12  &7.15\\
& 40  &1.05E-12  &6.94  &1.65E-12  &6.81  &5.36E-13  &6.79  &1.02E-12  &6.62\\
& 50  &2.57E-13  &6.33  &3.26E-13  &7.26  &1.98E-13  &4.47  &1.93E-13  &7.48\\ \hline\hline
\end{tabular}}
\end{table}

\section{Concluding Remarks}\label{summary}

In this paper, we obtain the superconvergence of the convection-diffusion equations based on the generalized alternating numerical fluxes. The main techniques are the construction of correction functions and analysis of the generalized Gauss--Radau projections and their modified versions. Different boundary conditions including periodic, mixed and Dirichlet boundary conditions are considered. The sharpness of the theoretical results is confirmed by numerical experiments.
In further work, we will consider the degenerate diffusion problems and multidimensional equations.

\Acknowledgements{This work was supported by National Natural Science Foundation of China (Grants No. 11971132, U1637208, 71773024, 51605114, 11501149) and  the National Key Research and Development Program of China (Grant No. 2017YFB1401801).}


\end{document}